\newtheorem{theorem}{Theorem}[section]
\newtheorem{lemma}[theorem]{Lemma}
\theoremstyle{definition}
\newtheorem{definition}[theorem]{Definition}
\newtheorem{remark}[theorem]{Remark}
\newtheorem{assumption}[theorem]{Assumption}
\newcommand{\norm}[1]{\left\Vert#1\right\Vert}
\numberwithin{equation}{section}
\begin{document}
\font\nho=cmr10
\def\dive{\mathrm{div}}
\def\cal{\mathcal}
\def\L{\cal L}

\def \ud{\underline }
\def\id{{\indent }}
\def\f{\frac}
\def\non{{\noindent}}
 \def\le{\leqslant} 
 \def\leq{\leqslant}
 \def\geq{\geqslant} 
\def\rar{\rightarrow}
\def\Rar{\Rightarrow}
\def\ti{\times}
\def\i{\mathbb I}
\def\j{\mathbb J}
\def\si{\sigma}
\def\Ga{\Gamma}
\def\ga{\gamma}
\def\ld{{\lambda}}
\def\Si{\Psi}
\def\f{\mathbf F}
\def\r{\hro{R}}
\def\e{\cal{E}}
\def\B{\cal B}
\def\A{\mathcal{A}}
\def\p{\mathbb P}

\def\tet{\theta}
\def\Tet{\Theta}
\def\hro{\mathbb}
\def\ho{\mathcal}
\def\P{\ho P}
\def\E{\mathcal{E}}
\def\n{\mathbb{N}}
\def\M{\mathbb{M}}
\def\dMu{\mathbf{U}}
\def\dMcs{\mathbf{C}}
\def\dMcu{\mathbf{C^u}}
\def\vk{\vskip 0.2cm}
\def\td{\Leftrightarrow}
\def\df{\frac}
\def\Wei{\mathrm{We}}
\def\Rey{\mathrm{Re}}
\def\s{\mathbb S}
\def\l{\mathcal{L}}
\def\C+{C_+([t_0,\infty))}
\def\o{\cal O}

\title[Certain classes of mild solutions of Lienard equation]{On certain classes of mild solutions of the scalar Li\'enard equation revisited}

\author[P.T. Xuan]{Pham Truong Xuan}
\address{Pham Truong Xuan \hfill\break Thang Long Institute of Mathematics and Applied Sciences (TIMAS), Thang Long University, Nghiem Xuan Yem, Hanoi, Vietnam} 
\email{phamtruongxuan.k5@gmail.com or xuanpt@thanglong.edu.vn} 

\author[N.T. Van]{Nguyen Thi Van}
\address{Nguyen Thi Van \hfill\break
Faculty of Computer Science and Engineering, Thuyloi University, 175 Tay Son, Dong Da, Hanoi, Vietnam} 
\email{van@tlu.edu.vn}

\author[N.T. Loan]{Nguyen Thi Loan}
\address{Nguyen Thi Loan\hfill\break Faculty of Fundamental Sciences, Phenikaa University, Hanoi 12116, Vietnam.}
\email{loan.nguyenthi2@phenikaa-uni.edu.vn}

\author[T.M. Nguyet]{Tran Minh Nguyet}
\address{Tran Minh Nguyet \hfill\break Department of Mathematics, Thang Long University, Nghiem Xuan Yem, Hanoi, Vietnam}
\email{nguyettm@thanglong.edu.vn}

\begin{abstract}  
In this work we revisit the existence, uniqueness and exponential decay of some classes of mild solutions which are almost periodic (AP-), asymptotically almost periodic (AAP-) and pseudo almost periodic (PAP-) of the scalar Lin\'eard equation by employing the notion of Green function and Massera-type principle. First, by changing variable we convert this equation to a system of first order differential equations. Then, we transform the problem into a framework of an abstract parabolic evolution equation which associates with an evolution family equipped an exponential dichtonomy and the corresponding Green function is exponentially almost periodic. After that, we prove a Massera-type principle that the corresponding linear equation has a uniqueness AP-, AAP- and PAP- mild solution if the right hand side and the coefficient functions are AP-, AAP- and PAP- functions, respectively. The well-posedness of semilinear equation is proved by using fixed point arguments and the exponential decay of mild solutions are obtained by using Gronwall's inequality. Although our work revisits some previous works on well-posedness of the Lin\'eard equation on these type solutions but provide a difference view by using Green function and go farther on the aspects of asymptotic behaviour of solutions and the construction of abstract theory. Our abstract results can be also applied to other parabolic evolution equations.
\end{abstract}

\subjclass[2020]{35A01, 35B10, 35B65, 35Q30, 35Q35, 76D03, 76D07}

\keywords{Scalar Li\'enard equation, Nonautonom parabolic equations, Exponential dichtonomy, Green function, Almost periodic mild solutions, Asymptotically almost periodic mild solutions, Pseudo almost periodic mild solutions, Exponential stability}

\maketitle

\tableofcontents

\section{Introduction}
Over the last century, various models of the Li\'enard type systems have been widely studied due to important applications in engineering, physics,mechanics and biology \cite{Gao,Xu,Xu1,Pe,Liu,GaGu}.  Historically, in the late 1920s, Liénard investigated the equation
\begin{equation}\label{eq01}
x''+\epsilon f(x)x'+\omega^2 x=0,
\end{equation}
in related to electrical circuits \cite{Li}, where $f(x)$ is even, $\epsilon$ is a positive scaling parameter, and $\omega$ is a constant. In the case, when $ f(x)=x^2-1$ and $ \omega =1$,  the equation \eqref{eq01} becomes the van der Pol oscillator, which  is a  popular model for simulating the dynamics of nonlinear physical phenomena. The Liénard systems have been motivated from mathematicians in the generation form   
\begin{equation}\label{eq02}
x''+\epsilon f(x)x'+ g(x)=e(t).
\end{equation} 

It is very significant to have information about the qualitative behavior of solutions of these type systems. Many studies on qualitative behaviors of solutions such as periodicity, oscillation, almost periodicity and pseudo almost periodicity are quite common in \cite{Gao,Liu,Pe,Xu,Xu1,Ya}. One should also recall that the concept of almost periodicity  was introduced in the first time by Bohr in the mid- twienties (see \cite{Bo1, Bo2, Bo3}).Then, the theory of almost periodic functions was continuously getting developement by some mathematicians like Amerio and Prouse \cite{Ame}, Levitan \cite{Le}, Besicovitch \cite{Be}, Bochner\cite{Boc}, von Neumann, Fr\'echet, Pontryagin, Lusternik, Stepanov, Weyl, etc. (see \cite{Che, Dia}). The concept of asymptotically almost periodicity was introduced later by the French mathematician Fr\'echet. There is series of works of known authors dedicated to asymptotically almost periodicity of solutions of differential equations, see \cite{Dia} for details. Besides, Zhang\cite{Zhang1} gave the concept pseudo almost periodic function to prove the existence of these solutions for a nonlinear parabolic equation.  When we review the literature, we can discover some works on qualitative behaviors of solutions. Fink \cite{Fi} used the $L^1-$ norm as a  Lyapunov function to prove the existence of almost periodic solutions for the above equation in condition $ 0<2\inf g'(x)\leqslant 2\sup g'(x)\leqslant\inf f^2(x) \leqslant  \sup f^2(x)<\infty$. Langenhop and Seifert \cite{LaSe} have investigated the existence of almost periodic solutions for the  forced Li\'enard equation contained in the bounded region $\Omega\subset \mathbb{R}^2$ and the solution is asymptotically stable with respect to all solutions whose trajectories enter $\Omega$ for all time. Moreover, Caraballo and D. Cheban (\cite{Ca}) studied the existence and uniqueness of almost periodic and asymptotically almost periodic mild solutions for the Li\'enard equation \eqref{eq02}. In a series of works \cite{Cie1,Cie2,Cie3}, Cieutat et al. established the structure of the set of bounded solutions and existence of certain types of solutions (such as almost periodic, pseudo almost periodic, etc...) both for the scalar and vectorial Li\'enard equations.
In addition, Gao and Liu \cite{Gao} studied the following almost periodic solutions of the following multiple-delay Li\'enard equation
\begin{equation}\label{eq03}
x''+ f(x(t))x(t)'+ g(x(t))+ \sum_{i=1}^n h_i(x(t-\sigma_i(t))=e(t),
\end{equation} 
where $f$ and $h_i$ (where $i = 0,1,2,...,n$) are continuous functions on $\mathbb{R}$, and the functions $\sigma_i(t) \leqslant 0$ (where $i = 1, 2, . . . , n$) and $e(t)$ are almost periodic on $\mathbb{R}$. Yazgan continued to show the existence of solutions for the Li\'enard -type system with multiple-delay   in the case the solutions are pseudo almost periodic (see \cite{Ya1}) and  more general, weighted pseudo almost automorphic by utilizing some differential inequalities, the main features of the weighted pseudo almost periodic and Banach fixed point theorem. 

Since the previous works (see for examples \cite{Gao,Xu,Zhu}), by changing variables method the scalar Li\'enard-type equations can be transformed into an abstract framework of nonautonom parabolic evolution equations on a Banach space $X$ and on the whole-line time axis as (see Subsection \ref{22}):
\begin{equation}\label{eq1}
u'(t)=A(t)u(t) + h(t,u(t)) \hbox{      for   } t\in \mathbb{R}
\end{equation}
and on the half-line time axis as
\begin{equation}\label{eq2} 
\left\{
  \begin{array}{rll}
u'(t) \!\! &= A(t)u(t) + h(t,u(t)) \quad  & \hbox{    for   } t\in \mathbb{R}_+, \hfill \cr
u(0) \!\!&=\; u(0) \in X. \quad &{}\cr
\end{array}\right.
\end{equation}
There is a rich literature which studies qualitative theory of parabolic evolution equations such as well-posedness of mild solutions (and certain types of mild solutions) and their stability under certain suitable conditions of operator $A(t)$ (see \cite{Batty1999,Schnau,MaSch,Naito,Phong,Ruess1} and many reference thereins). Therefore, in the present paper we shall use and develop the abstract results to revisit the original scalar Li\'enard equation (and its-type equations). This approach provides a general view in the study. We want to express in this paper the thought process in the mind (which reflects the philosophy process of perception): we start from the study of existence and stability of mild solutions for the scalar Li\'enard equation. These problems lead us to come to study on an abstract parabolic evolution equation. Then, we comeback to apply abstract results to the origin equation. In addition, we find also that the abstract results can be applied to some nonautonom parabolic equations and other second-order equations. This process is clearly demonstrated through the structure of this article.

The existence, uniqueness and stability for almost periodic mild solutions and its generalizations for parabolic evolution equations (and the other equations which can be converted to parabolic evolution equations) have been extensively studied for a long time. We refer the readers to some useful books \cite{Ame,Dia,Fink,Hale,Le} which provide the theory on almost periodic functions and its applications in PDE. 
In our knowledge, most of the works in this topic for nonautonom parabolic evolution equations the authors need the exponential dichotomy of evolution family $\left\{ U(t,s)\right\}_{t \geqslant s}$ and some suitable conditions on the operator $A(t)$ (which is associated with the equations) to prove the existence of certain types of mild solutions. In particular, Schnaubelt et al. \cite{Schnau} provided an important result about the equivalence between the existence an evolution family (which satisfies exponential bounded condition and exponential dichtonomy) and the existence of bounded mild solutions for nonautonom parabolic evolution equations.
Then, Maniar and Schnaubelt \cite{MaSch} studied the existence of almost periodic (defined on the whole-line time axis) and asymptotically almost periodic (defined on the half-line time axis) mild solutions for non homogeneous parabolic evolution equations. To do this, the authors in \cite{MaSch} considered that there was an exponential bounded evolution family $\left\{ U(t,s)\right\}_{t\geqslant s}$ which satisfies exponential dichtonomy and they assume further that the resolvent of operator $A(t)$ (associated with the corresponding linear equations) is an almost periodic function with respect to the time. In the same time, Batty et al. \cite{Batty1999} showed the existence results about asymptotically almost periodic mild solutions for nonautonom parabolic equations with asymptotically almost periodic external forces under the assumptions on the periodic condition of operator $A(\cdot)$ and the spectrum of the monodromy operator $V=U(q,0)$ containing only countably many points of the unit circle. In fact, the results in \cite{Batty1999} generalized from the early ones obtained in \cite{Aren,Phong,Ruess1}. In this direction, we refer other related to  Naito and Minh \cite{Naito}. The periodic assumption of operator $A(\cdot)$ or $U(\cdot,\cdot)$ was also used in other works concerning the existence of periodic mild solutions for parabolic evolution equations and more specific for the partial neutral functional differential equations (see for examples \cite{Huy2016,Huy2019}). 

On the other hand, there are many works which introduce the notions of certain generalized types of mild solutions (such as pseudo almost periodic, weighted pseudo almost periodic, almost automorphic, pseudo almost automorphic, etc... mild solutions) and then study the existence and uniqueness  of such solutions for autonom and nonautonom parabolic equations (see \cite{Ami,Baro,Baro1,Cu,Dia1,
Dia2,Dia3,Zhang1,Zhang2} and many reference thereins). We refer the readers to useful books \cite{Dia,Gu, LiuMinh} which provide fully the notions of generalized functions and their applications to parabolic evolution equations and dynamical systems.

Now, we describe our method and results as follows: for a given function $v\in C_b(\mathbb{R},X)$, the corresponding nonhomogenuous linear equations \eqref{eq1} and \eqref{eq2} are
\begin{equation}\label{eq11}
u'(t)=A(t)u(t) + h(t,v(t)) \hbox{      for   } t\in \mathbb{R}.
\end{equation}
and on the half-line time axis
\begin{equation}\label{eq22} 
\left\{
  \begin{array}{rll}
u'(t) \!\! &= A(t)u(t) + h(t,v(t)) \quad  & \hbox{    for   } t\in \mathbb{R}_+, \hfill \cr
u(0) \!\!&=\; u(0) \in X, \quad &{}\cr
\end{array}\right.
\end{equation}
respectively. 

In order to establish the existence and exponential stability, we make some necessary assumptions that: there is an evolution family $\left\{ U(t,s) \right\}_{t\geqslant s}$ associated with the homogeneous equation $u'(t)=A(t)u(t)$ and $U(t,s)$ satisfying exponential dichtonomy and exponential boundedness in Assumption \ref{dich} below. Under the exponential almost periodic assumption of the Green function $G(t,s)$ (see Assumption \ref{GreenAss} below), we prove the well-posedness of almost periodic (AP-), asymptotically almost periodic (AAP-) and pseudo almost periodic (PAP-) mild solutions for the corresponding linear equations of \eqref{eq11} and \eqref{eq22} by proving Massera-type principle that: if $h$ is AP- or AAP- or PAP-function, then the mild solution is also AP- or AAP- or PAP-function, respectively (in details see Theorem \ref{lem2}). Note that, we will use the formula of mild solutions via the Green function $G(t,s)$ in the proofs, meanwhile most of the previous works used the direct formula of mild solutions via $U(t,s)$. In comparing with the previous works on the Li\'enard equations, our novelty is the exponential almost periodic assumption of $G(t,s)$ (see Assumption \ref{GreenAss}) which covers all of conditions used in previous works to guarantee the validity of Massera-type principle for the scalar Li\'enard-type equations (see for examples \cite{Gao,Xu} and reference thereins). Since we do not consider directly the periodic or almost periodic conditions on the operator $A(t)$ or $U(t,s)$, this is also a difference from the previous works on the existence of certain types of mild solutions (such as AP-,AAP-, PAP- and other types of mild solutions (for a monograph for introduction, see e.g., \cite{Dia}).

Using the well-posedness results of linear equations and fixed point arguments we establish the existence of AP-, AAP- and PAP- mild solutions for semilinear equations \eqref{eq1} and \eqref{eq2} (see Theorem \ref{th:2}). The exponential stability will be proven by utilizing the exponential stability of the Green function and a Gronwall-type inequality. Finally, we construct some examples for applying the abstract results to the scalar Li\'enard equation (see Subsection \ref{APP}) and we provide also another example for non-autonom parabolic equation (see Subsection \ref{S52}).

Our paper is organized as follows: Section \ref{S2} relies on the certain concepts of generalized functions such as almost periodic, asymptotically almost periodic and pseudo almost periodic solutions (see Subsection \ref{S21}) and on the formulas of the scalar Li\'enard equation (see Subsection \ref{22}). In Section \ref{S3} we prove the well-posedness results for the linear parabolic evolution equations. In Section \ref{S4}, we study the existence and exponential stability of AP-, AAP- and PAP- mild solutions for semilinear equations. Finally, we apply the abstract results obtained in previous sections to the scalar Li\'enard equation and a nonautonom parabolic equation in  Section \ref{S5}.

\section{Preliminaries}\label{S2}
\subsection{The concepts of functions}\label{S21}
\bigskip
Let $X$ be a Banach space, we denote 
$$C_b(\mathbb{R}, X):=\{f:\mathbb{R} \to X \mid f\hbox{ is continuous on $\mathbb{R}$  and }\sup_{t\in\mathbb{R}}\|f(t)\|_X<\infty\}$$
which is a Banach space endowed with the norm $\|f\|_{\infty, X}=\|f\|_{C_b(\mathbb{R}, X)}:=\sup\limits_{t\in\mathbb{R}}\|f(t)\|_X$.
Similarly, we denote 
$$C_b(\mathbb{R}_+, X):=\{f:\mathbb{R}_+ \to X \mid f\hbox{ is continuous on $\mathbb{R}$ and }\sup_{t\in\mathbb{R}_+}\|f(t)\|_X<\infty\}$$
which is also a  Banach space endowed with the norm $\|f\|_{\infty, X}=\|f\|_{C_b(\mathbb{R}_+, X)}:=\sup\limits_{t\in\mathbb{R}_+}\|f(t)\|_X$.

\begin{definition}(Bohr \cite{Bo1,Bo2,Bo3})
A function  $h \in C_b(\mathbb{R}, X )$ is called almost periodic function if for each $ \epsilon  > 0$, there exists $l_{\epsilon}>0 $ such that every interval of length $l_{\epsilon}$ contains at least a number $T $ with the following property
\begin{equation*}
 \sup_{t \in \mathbb{R} } \| h(t+T)  - h(t) \| < \epsilon.
\end{equation*}
The collection of all almost periodic functions $h:\mathbb{R} \to X $ will be denoted by $AP(\mathbb{R},X)$ which is a Banach space endowed with the norm $\|h\|_{ AP(\mathbb{R},X)}=\sup\limits_{t\in\mathbb{R}}\|h(t)\|_X.$
\end{definition}
To introduce the asymptotically almost periodic functions, we need the space  $C_0 (\mathbb{R}_+,X)$, that is, the collection of all continuous functions $\varphi: \mathbb{R}_+ \to X$ such that
 $$\mathop{\lim}\limits_{t \to \infty } \| \varphi(t) \|=0.$$
Clearly, $C_0 (\mathbb{R}_+,X)$  is a Banach space endowed with the norm $\|\varphi\|_{C_0 (\mathbb{R}_+,X)}=\sup\limits_{t\in\mathbb{R}_+}\|\varphi(t)\|_X$.
\begin{definition} 
A function  $f \in C_b(\mathbb{R}_+, X )$  is said to be (forward-) asymptotically almost periodic if there exist  $h \in AP(\mathbb{R},X)$ and $ \varphi\in C_0(\mathbb{R}_+,X)$ such that
\begin{equation*}
f(t) = h(t) + \varphi(t).
\end{equation*}
We denote 
$$AAP(\mathbb{R}_+, X):= \{f:\mathbb{R}_+ \to X \mid f\hbox{ is asymptotically almost periodic on $\mathbb{R}_+$}\}.$$ Under the norm $\|f\|_{ AAP(\mathbb{R}_+,X)}=\|h\|_{ AP(\mathbb{R}, X)}+\|\varphi\|_{ C_0(\mathbb{R}_+,X)}$, then $AAP(\mathbb{R}_+,X)$ is a Banach space.
\end{definition} 
The decomposition of asymptotically almost periodic functions is unique (see \cite[Proposition 3.44, page 97]{Dia}), that is, 
$$AAP(\mathbb{R}_+,X) = AP(\mathbb{R},X) \oplus C_0(\mathbb{R}_+,X).$$
\begin{definition}\label{WPAAfunction}(PAP-function)
A function $f \in C_b(\mathbb{R},X)$ is called pseudo almost periodic if it can be decomposed as $f = g + \phi$ where $g \in AP(\mathbb{R},X)$ and $\phi$ is a bounded continuous function with vanishing mean value i.e
\begin{equation*}\label{meanvalue}
\lim_{L\to \infty}\frac{1}{2L}\int_{-L}^L \|\phi(t)\|_X dt =0.
\end{equation*}
We denote the set of all functions with vanishing mean value by $PAP_0(\mathbb{R},X)$
and the set of all the pseudo almost periodic (PAP-) functions by $PAP(\mathbb{R},X)$. 
\end{definition}
We have that $(PAP(\mathbb{R},X),\|.\|_{\infty,X})$ is a Banach space, where $\|.\|_{\infty,X}$ is the supremum norm (see \cite[Theorem 5.9]{Dia}). As well as AAP- functional space, we have the following decomposition (see also \cite{Dia}): 
$$PAP(\mathbb{R},X) = AP(\mathbb{R},X) \oplus PAP_0(\mathbb{R},X).$$

The notion of pseudo almost periodic function is a generalisation of the almost periodic and asymptotically almost periodic (AAP-) and almost periodic (AP-) functions. Precisely, we have the following inclusions
$$P(\mathbb{R},X) \hookrightarrow AP(\mathbb{R},X) \hookrightarrow AAP(\mathbb{R}_+,X) \hookrightarrow PAP(\mathbb{R},X) \hookrightarrow C_b(\mathbb{R},X).$$
where $P(\mathbb{R},X)$ is the space of all  continuous and periodic functions from $\mathbb{R}$  to $X$.\\

{\bf Example.}
\begin{itemize}
\item[$(i)$] The function $h(t)=\sin{ t}+\sin({\sqrt{2}t})$ is almost periodic but not periodic,  $\tilde{h}(t) =\sin{ t}+\sin({\sqrt{2}t})+ \dfrac{1}{|t|}$  is asymptotically almost periodic but not almost periodic.
\item[$(ii)$] {The function $\hat{h}(t) = \sin{ t}+\sin({\sqrt{2}t}) + \dfrac{t}{\sqrt{1+t^2}}$ is pseudo almost periodic but not asymptotically almost periodic.}
\item[$(iii)$] Let $X$ be a Banach space and $g\in X - \left\{ 0\right\}$, we have that $f=hg\in AP(\mathbb{R},X)$, $\tilde{f}=\tilde{h}g \in AAP(\mathbb{R}_+,X)$ and $\hat{f} = \hat{h}g\in PAP(\mathbb{R},X)$.
\end{itemize}

\subsection{From the scalar Li\'enard equation to an abstract framework} \label{22}
We consider the following original scalar Li\'enard-type equation
\begin{equation}\label{eq:0.1}
x'' + {f(x)}x'+g(t,x)=e(t),
\end{equation}
where {$f:\mathbb{R}\to \mathbb{R}$} (depending on $x$), $g:\mathbb{R}^2\to \mathbb{R}$ (depending on $(t,x)$) and $e:\mathbb{R}\to \mathbb{R}$ (depending on the time $t$) are given functions and $x:\mathbb{R}\to \mathbb{R}$ (depending on the time $t$) is unknown.

The scalar Li\'enard equation \eqref{eq:0.1} is a differential equation with second order. There are several ways to transform this equation to a first order differential system. Below, we follows the changing variables used in \cite{LaSe,Zhu}. Another way to do this can be found in the end of Subsection \ref{APP} below or more details in \cite{Xu}.
Setting $F(x)=\int\limits_0^x f(s)ds$, then the system \eqref{eq:0.1} is equivalent to the following system
\begin{equation*}\label{eq:2.2}
\begin{cases}
    x'=y-\int\limits^x_0 f(s)ds = y-F(x),\\
     y'=-g(t,x)+e(t).
 \end{cases}
\end{equation*}
Under the matrix term, this system can be rewritten as
\begin{equation}\label{eq:2.2matrix}
\frac{d}{dt}\begin{bmatrix}
x\\ 
y
\end{bmatrix}
=A(t)\begin{bmatrix}
 x\\
y
\end{bmatrix} + \begin{bmatrix}
0\\ {-g(t,0)} + e(t)
\end{bmatrix},
\end{equation}
where
\begin{equation*}
A(t) = \int\limits_0^1
\begin{bmatrix}
-f(s{\alpha(t)}) &1\\
-g_x(t,s{\alpha(t)}) &0
\end{bmatrix}
ds.
\end{equation*}
The operator $A(t)$ acts on a vector $\begin{bmatrix}
x\\ 
y
\end{bmatrix}$ by the following sense  
\begin{equation*}
f(s\alpha(t))x \mapsto f(sx(t)), \, g(t,s\alpha(t))x \mapsto g(t,sx(t)).
\end{equation*}

For convenience to state and prove the main results of this paper, we consider the well-posedness and asymptotic behaviour of the following abstract parabolic evolution equation
\begin{equation}\label{abstract}
u'(t) = A(t)u(t) + h(t,u).
\end{equation} 
It is clearly that equation \eqref{eq:2.2matrix} is a specific case of \eqref{abstract}, in fact a linear equation  with $u = \begin{bmatrix}x\\y \end{bmatrix}$ and $h(t,0) = \begin{bmatrix}0\\{-g(t,0)}+e(t)\end{bmatrix}$.

For a given scalar vector function $v$, the corresponding (inhomogenoues) linear equation of \eqref{eq:2.2matrix} is
\begin{equation}\label{linearAbstract}
u'(t) = A(t)u(t) + h(t,v).
\end{equation}

\begin{definition}\label{EvolutionFa}
Let $X$ be a Banach space. A set $U=\left\{ U(t,s): t\geqslant s; t,s\in \mathbb{R}\right\}$ of bounded linear operators on $X$ is called a exponential boundedness evolution family if
\begin{itemize}
\item[$(i)$] $U(t,s)=U(t,r)U(r,s)$ and $U(s,s)=\mathrm{Id}$ for $t\geqslant r\geqslant s$,
\item[$(ii)$] $(t,s)\mapsto U(t,s)$ is strongly continuous for $t\geqslant s$,
\item[$(iii)$] $\|U(t,s)\|\leqslant Me^{\omega (t-s)}$ for $t\geqslant s$.
\end{itemize}
\end{definition}
 
\begin{definition}
An evolution family $\left\{ U(t,s)\right\}_{t\geqslant s}$ is said to solve the Cauchy problem of equation \eqref{linearAbstract} with the initial data $u(s)=u_s \in D(A(s))$ if $u(\cdot)=U(\cdot,s)u_s$ is differentiable, $u(t)\in D(A(t))$ for $t\geqslant 0$ and \eqref{linearAbstract} holds.
\end{definition}
\begin{remark}
We notice that Acquistapace and Terreni \cite{AcTe} introduced the assumptions on the operator $A(t)$ and its resolvent which guarantee that there exists a unique evolution family $U(t,s)$ (satisfying conditions $(i)$ and $(ii)$ in Definition \ref{EvolutionFa}) on $X$ and $U(t,s)$ satisfies also exponential boundedness (condition $(iii)$ in Definition \ref{EvolutionFa}) with $\omega=0$. Moreover, this family solves homogenoues equation $u'(t)=A(t)u(t)$.
\end{remark}
In order to study the existence and uniqueness of mild solutions for semilinar equation \eqref{abstract}, we need the following assumptions on evolution family $\left\{U(t,s) \right\}_{t\geqslant s}$ which is called exponential dichtonomy (see for example \cite{Schnau}).
\begin{assumption}\label{dich}
The evolution family $\left\{ U(t,s) \right\}_{t\geqslant s}$ on Banach space $X$ satisfies exponential dichtonomy on the whole-line time axis $\mathbb{R}$ (with a constant $\beta >0$) if there exists a projection valued function $P:\mathbb{R}\to B(X)$ (where $B(X)$ is the set of bounded linear operators on $X$) such that the function $t\mapsto P(t)z$ is uniformly bounded and strong continuous in $t$ for each $z\in X$ and for some constant $M=M(\beta)>0$ and for all $t\geqslant s$ the following properties hold
\begin{itemize}
\item[$(i)$] $P(t)U(t,s)=U(t,s)P(s)$,
\item[$(ii)$] The restriction $U_Q(t,s) = Q(t)U(t,s)Q(s): Q(s)X \to Q(t)X$ of $U(t,s)$ (where $Q(t)= \mathrm{Id} - P(t)$) is invertible and we set $U_Q(s,t)= (U_Q(t,s))^{-1}$;
\item[$(iii)$] $\left\|U_P(t,s)\right\| \leqslant N e^{-\beta(t-s)}$, where $U_P(t,s) = P(t)U(t,s)P(s)$;
\item[$(iv)$] $\left\|U_Q(s,t)\right\| \leqslant N e^{-\beta(t-s)}$.
\end{itemize}
\end{assumption}
\begin{remark}
\begin{itemize}
\item[$(i)$] The projections $P(t), \, t\in \mathbb{R}$, are called the dichtonomy projections (see \cite{MaSch} for the detailed formula of these projections), and the constants $M,\beta$ are called the dichtonomy constants. 
\item[$(ii)$] Clearly, we have $H=\sup\limits_{t\in \mathbb{R}}\left\|P(t)\right\|<\infty$.
\end{itemize}
\end{remark}
For $U(t,s)$ satisfied Assumption \ref{dich}, one can define the Green function as follows
\begin{equation}\label{GreenForm}
G(t,s)=\begin{cases}
    P(t)U(t,s) & \hbox{for  } t>s,\, t,s\in \mathbb{R} \cr
    -U(t,s)Q(s) & \hbox{for  } t\leqslant s,\, t,s \in \mathbb{R}.
 \end{cases}
\end{equation} 
Clearly, using $(iii)$ and $(iv)$ in Assumption \ref{dich} we have
\begin{equation}\label{ineq-GF}
\|G(t,s)\| \leqslant (1+H)N e^{-\beta |t-s|} 
\end{equation}
for $t\neq s$.

To establish the well-posedness of the almost periodic (AP-), asymptotically almost periodic (AAP-) and pseudo almost periodic (PAP-) mild solutions for linear and semilinear equations \eqref{abstract} and \eqref{linearAbstract}, we need the following assumption on the Green function.
\begin{assumption}\label{GreenAss}
{The Green function $G(t,s)$ is said to exponentially almost periodic if it satisfies: 
for given constants $\eta>0$ and $\varepsilon>0$, there exist constants $\gamma>0$ and $l_{\varepsilon}>0 $ such that every interval of length $l_{\varepsilon}$ contains at least a number $T $ such that}
\begin{equation}\label{GreenAP}
\|G(T+t,T+s)-G(t,s)\| \leqslant \varepsilon e^{-\gamma(t-s)}
\end{equation}
for $|t-s|\geqslant \eta>0$.
\end{assumption}
\begin{remark}\label{rem}
\begin{itemize}
\item[$(i)$] Clearly, this assumption is valid if $A(t)$ is periodic, i.e., there exists a constant $T>0$ such that $A(t+T)=A(t)$. This leads to the periodicity of $R(\delta,A(\cdot))$, then $G(t,s)$ is periodic with the same periodicity $T$, i.e., $G(T+t,T+s)=G(t,s)$ (see \cite{MaSch}).
Therefore, $G(t,s)$ satisfies inequality \eqref{GreenAP} for $\varepsilon=0$.

\item[$(ii)$] Moreover, this assumption is valid if we assume that the operator $A(\cdot)$ (or more general the resolvent $R(\delta, A(\cdot)))$ is almost periodic (see the proof in \cite{MaSch,Yagi}).
\end{itemize}
\end{remark}

\section{Well-posedness of AP-, AAP- and PAP- mild solutions for linear equations}\label{S3}
The mild solution of linear evolution equation \eqref{linearAbstract} on the whole-line time axis $\mathbb{R}_t$ is defined by
\begin{equation*}\label{mild1}
u(t) = U(t,s)u(s) + \int_s^t U(t,s)h(s,v(s))ds \hbox{   for   } t\geqslant s.
\end{equation*}
On the other hand, if we consider Cauchy problem of equation \eqref{linearAbstract} with the initial data $u(0)=u_0$, then the mild solution (on the half-line time axis) is defined by
\begin{equation}\label{mild2}
u(t) = U(t,0)u_0 + \int_0^t U(t,s)h(s,v(s))ds.
\end{equation}
{There are many previous works which used these direct formulas to establish the existence of AP-, AAP- an PAP- mild solutions (and other types of mild-solutions) for linear equation \eqref{linearAbstract} (see for examples \cite{Gao,Zhu,Xu,Ya} and reference thereins). However, below we will provide another approach by employing the formula of mild solutions representing via Green function. This approach helps us to give the proofs more shorter than the ones in previous works.}

We state and prove the main result of this section in the following theorem.
\begin{theorem}\label{lem2} 
The following assertions hold
\begin{itemize}
\item[$(i)$] If $h\in C_b(\mathbb{R},X)$ (resp. $h\in C_b(\mathbb{R}_+,X)$), then linear equation \eqref{linearAbstract} has a unique bounded mild solution $\hat{u}\in C_b(\mathbb{R},X)$ (resp. $\hat{u}\in C_b(\mathbb{R}_+,X)$). 
\item[$(ii)$] Assume that the Green function satisfies Assumptiton \ref{GreenAss}. Then, the Massera-type principle is valid for AP-, AAP- and PAP- mild solutions. Precisely, the following assertions hold
\begin{itemize}
\item[$\bullet$] If $v(\cdot)\in AP(\mathbb{R},X)$ and $h(\cdot,v(\cdot))\in AP(\mathbb{R},X)$, then equation \eqref{linearAbstract} has a unique mild solution $\hat{u}\in AP(\mathbb{R},X)$.
\item[$\bullet$] If $v(\cdot)\in AAP(\mathbb{R}_+,X)$ and $h(\cdot,v(\cdot))\in AAP(\mathbb{R}_+,X)$, then equation \eqref{linearAbstract} has a unique  mild solution $\hat{u}\in AAP(\mathbb{R}_+,X)$.
\item[$\bullet$] If $v(\cdot)\in PAP(\mathbb{R},X)$ and $h(\cdot,v(\cdot))\in PAP(\mathbb{R},X)$, then equation \eqref{linearAbstract} has a unique mild solution $\hat{u}\in PAP(\mathbb{R},X)$.
\end{itemize}
\end{itemize}
\end{theorem}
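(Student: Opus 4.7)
The plan is to use the Green function representation of the bounded mild solution, rather than the forward formula \eqref{mild2}. On the whole line I would take
\[
\hat{u}(t) = \int_{-\infty}^{\infty} G(t,s)\, h(s, v(s))\, ds,
\]
and on the half-line an analogous expression (splitting an integral over $[0,t]$ against $P(t)U(t,s)$ and over $[t,\infty)$ against $-U(t,s)Q(s)$, plus a stable part driven by the initial data $u_0$).

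For part $(i)$, I would first verify via the exponential decay estimate \eqref{ineq-GF} that the integral converges absolutely with $\|\hat{u}\|_\infty \leq 2(1+H)N\|h\|_\infty/\beta$, then check directly that it satisfies the variation-of-constants identity $\hat{u}(t) = U(t,s)\hat{u}(s) + \int_s^t U(t,\tau) h(\tau,v(\tau))\,d\tau$ for every $t \geq s$, so that $\hat{u}$ is indeed a mild solution. Uniqueness follows because the difference $w$ of two bounded mild solutions satisfies $w(t)=U(t,s)w(s)$; decomposing $w(t) = P(t)w(t) + Q(t)w(t)$ and invoking Assumption \ref{dich}~$(iii),(iv)$ on each component in the appropriate time direction forces $w \equiv 0$.

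For part $(ii)$, the key idea is that the representation transports the regularity properties of $h(\cdot, v(\cdot))$ to $\hat{u}$ thanks to Assumption \ref{GreenAss}. For the AP case, given $\varepsilon > 0$ I first choose $\eta > 0$ small enough (depending on $\varepsilon$, $H$, $N$, $\|h\|_\infty$) that the strip of width $2\eta$ around the diagonal contributes at most $O(\varepsilon)$ regardless of the translation. Applying both the almost periodicity of $h(\cdot, v(\cdot))$ and Assumption \ref{GreenAss} at the pair $(\eta,\varepsilon)$, I extract a common $l_\varepsilon$ so that every interval of length $l_\varepsilon$ contains a $T$ enjoying both translation estimates. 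After the substitution $s \mapsto s+T$ in one of the integrals, and the decomposition
\[
\hat{u}(t+T) - \hat{u}(t) = \int [G(t+T, s+T) - G(t,s)]\, h(s+T, v(s+T))\, ds + \int G(t,s)\, [h(s+T, v(s+T)) - h(s, v(s))]\, ds,
\]
each piece is bounded by $C\varepsilon$ uniformly in $t$ via \eqref{ineq-GF}, \eqref{GreenAP}, and the AP estimate on $h(\cdot,v(\cdot))$.

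For the AAP and PAP cases I would use the canonical decompositions $h(\cdot,v(\cdot)) = h_1 + h_2$ with $h_1 \in AP(\mathbb{R},X)$ and $h_2 \in C_0(\mathbb{R}_+,X)$ (resp.\ $h_2 \in PAP_0(\mathbb{R},X)$), apply the AP result to $h_1$, and show that $t \mapsto \int G(t,s) h_2(s)\, ds$ inherits the $C_0$ (resp.\ $PAP_0$) property. For the $C_0$ case, given $\varepsilon>0$ one picks $\tau$ with $\|h_2(s)\|<\varepsilon$ for $s\geq\tau$, splits the integral into $[0,\tau]$ and $[\tau,\infty)$, and uses exponential decay of $G$ to get the first piece to vanish as $t\to\infty$. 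For the $PAP_0$ case, one applies Fubini to interchange the mean-value limit with the $s$-integral, with the exponentially decaying kernel $G$ serving as a dominating integrable function. The main obstacle is the lack of control on $G(t+T,s+T) - G(t,s)$ near the diagonal: Assumption \ref{GreenAss} provides the estimate only for $|t-s|\geq\eta$, and the naive bound by $2(1+H)N$ on the complementary strip does not shrink by itself. The resolution --- treating $\eta$ as a free parameter chosen after $\varepsilon$ is given --- works precisely because the measure of the strip $\{|t-s|<\eta\}$ is $O(\eta)$ while $G$ is uniformly bounded there, so shrinking $\eta$ absorbs the near-diagonal contribution into the $O(\varepsilon)$ budget.
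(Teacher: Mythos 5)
Your proposal follows essentially the same route as the paper's proof: the Green-function representation $\hat{u}(t)=\int_{\mathbb{R}}G(t,s)h(s,v(s))\,ds$ (with the $U(t,0)\zeta_0$ correction on the half-line), the decomposition of $\hat{u}(t+T)-\hat{u}(t)$ into the $h$-translation piece and the $G$-translation piece with the near-diagonal strip $\{|t-s|<\eta\}$ absorbed by choosing $\eta$ after $\varepsilon$, and the $AP\oplus C_0$ and $AP\oplus PAP_0$ decompositions handled via exponential decay of $G$, Fubini and dominated convergence. The only cosmetic difference is that you verify existence and uniqueness of the bounded mild solution directly (via the variation-of-constants identity and the $P/Q$ splitting), where the paper cites Latushkin--Randolph--Schnaubelt and Huy--Dang for those facts.
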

\begin{proof}
$(i)$ 
{
The well-posedness of bounded mild solutions of linear equation \eqref{linearAbstract} was proven by Schnaubelt et al. in \cite[Theorem 1.1]{Schnau}.
Note that, this result provides also the reason which the necessity of the exponential bounded evolution family $U(t,s)$ in Assumption \ref{dich}.
The formula of bounded mild solution via Green function on the whole line is (see \cite[Pages 496-498]{Schnau}):
\begin{equation*}
u(t) = \int_\mathbb{R} G(t,s)h(s,v(s))ds.
\end{equation*}}
{Due to the exponential stability \eqref{ineq-GF} of Green function, we can estimate easly that
\begin{eqnarray}\label{bounded}
 \|u\|_{C_b(\mathbb{R},X)} &\leqslant& \int_\mathbb{R}\|G(t,s)h(s,v(s))\|_Xds\cr
 &\leqslant& \int_{-\infty}^{+\infty} (1+H)Ne^{-\beta|t-s|}\|h(s,v(s))\|_Xds\cr
 &\leqslant& \frac{2(1+H)N}{\beta} \|h(\cdot, v(\cdot)) \|_{C_b(\mathbb{R},X)}.
\end{eqnarray}}

{Now we consider the Cauchy problem of linear equation \eqref{linearAbstract} with the initial data $u(0)=u_0$. 
Assume that $h\in C_b(\mathbb{R}_+,X)$. Let $u\in C_b(\mathbb{R}_+,X)$ be a solution of this Cauchy problem, then we can rewritten $u$ as (see \cite[Lemma 4.2 (a)]{Huy2016}):
\begin{equation}\label{mildHalf}
u(t) = U(t,0)\zeta_0 + \int_0^\infty G(t,s)h(s.v(s))ds 
\end{equation}
for $\zeta_0 = u_0 - \int_0^\infty G(0,s)h(s,v(s))ds  \in P(0)X$. The converses is also true, i.e., if a function $u$ satisfies integral equation \eqref{mildHalf}, then it is a bounded mild solution of Cauchy problem of linear equation \eqref{linearAbstract} and satisfies integral equation \eqref{mild2}. Therefore, the existence is clearly. Now, we consider $u$ and $v$ are two bounded mild solutions of \eqref{linearAbstract} with $u(0)=v(0)$, then $z=u_1-u_2$ is a bounded mild solution of the following equation
\begin{equation*}
z'(t) = A(t)z(t), \, z(0)=0.
\end{equation*}
Therefore,
\begin{equation*}
z(t) = U(t,0)z(0)=0 \hbox{  for all  } t>0.
\end{equation*}
The uniqueness holds.
By the same way as \eqref{bounded} we can estimate 
\begin{eqnarray}
\|u(t)\|_X &\leqslant& N\|\zeta_0\|_X + \frac{2(1+H)N}{\beta} \|h(\cdot, v(\cdot)) \|_{C_b(\mathbb{R}_+,X)} \nonumber\\
&\leqslant& N\|u_0 - \int_0^\infty G(0,s)h(s,v(s))ds\|_X \nonumber\\
&&\quad+ \frac{2(1+H)N}{\beta} \|h(\cdot, v(\cdot)) \|_{C_b(\mathbb{R}_+,X)}\nonumber\\
&\leqslant& N\|u_0\|_X + (1+H)N\int_0^\infty e^{-\beta s}\|h(s,v(s))\|_X ds \nonumber\\
&&\quad+ \frac{2(1+H)N}{\beta} \|h(\cdot, v(\cdot)) \|_{C_b(\mathbb{R}_+,X)} \nonumber\\
&\leqslant& N\|u_0\|_X + \frac{3(1+H)N}{\beta} \|h(\cdot, v(\cdot)) \|_{C_b(\mathbb{R}_+,X)}.
\end{eqnarray} 
}

$(ii)$ In fact, this assertion verifies the so-called Massera-type principle for parabolic evolution equations (see for example \cite{Dia}). The Massera-type principle was also used to study the well-posedness of various mild solutions in fluid dynamics (see \cite{XVQ,XVT} and many reference thereins). In particular, the principle states that: if the function $f$ satisfies AP or AAP or PAP property, then this is valid for the corresponding mild solution. To prove this assertion we will extend the previous proofs in \cite{XVQ,XVT} to our framework of abstract evolution equation \eqref{linearAbstract}. 

{\bf Issue 1. First, we prove the assertion for almost periodic case.}
From the existence and uniqueness of mild solution on the whole-line time axis obtained in Assertion $(i)$, the solution operator $S:C_b(\mathbb{R},X)\to C_b(\mathbb{R},X)$ associating with the linear equation \eqref{linearAbstract} can be defined as follows: for a given $v\in C_b(\mathbb{R},X)$, we define 
\begin{equation*}\label{SolOpe1}
S(v)(t) =u(t) = \int_{\mathbb{R}}G(t,s)h(s,v(s))ds.
\end{equation*}
Since $v$ is almost periodic and $h(\cdot,v(\cdot))\in AP(\mathbb{R},X)$, we have that: for each $ \varepsilon  > 0$, there exists $l_{\varepsilon}>0 $ such that every interval of length $l_{\varepsilon}$ contains at least a number $T $ with the following property
$$\sup_{t \in \mathbb{R} } \| h(t+T,v(t+T))  - h(t,v(t)) \|_X < \varepsilon.$$
Therefore, combining with the fact that $G(t,s)$ satisfied Assumption \ref{GreenAss}, we can estimate
\begin{eqnarray}\label{AP1}
&&\left\|S(v)(t+T) - S(v)(t)\right\|_X \cr
&=& \left\|\int_{\mathbb{R}} G(T+t,s)h(s,v(s))ds - \int_{\mathbb{R}} G(t,s)h(s,v(s)) ds \right\|_X \cr
&=& \left\|\int_{\mathbb{R}} G(T+t,T+s)h(T+s,v(T+s))ds - \int_{\mathbb{R}} G(t,s)h(s,v(s)) ds \right\|_X \cr
&&\text{ (we used the changing variable   $s\mapsto T+s$ in the first term)}\cr
&\leqslant& \int_{\mathbb{R}} \left\|G(T+t,T+s)[h(T+s,v(T+s)) - h(s,v(s))] \right\|_X ds  \cr
&&+ \int_{|t-s|\geqslant \eta} \left\|[G(T+t,T+s)-G(t,s)]h(s,v(s)) \right\|_X  ds \cr
&&+ \int_{|t-s| < \eta} \left\|[G(T+t,T+s)-G(t,s)]h(s,v(s)) \right\|_X  ds \cr 
&\leqslant& \left\| h(\cdot+T,v(\cdot+T)) - h(\cdot,v(\cdot))\right\|_{C_b(\mathbb{R},X)} \int_{\mathbb{R}} (1+H)M e^{-\beta|t-s|}ds\cr
&&+ \left\| h\right\|_{C_b(\mathbb{R},X)}\left(\int_{|t-s|\geqslant\eta}\varepsilon e^{-\gamma|t-s|}ds + \int_{t-\eta}^{t+\eta} 2(1+H) M ds \right)\cr
&\leqslant& \frac{2(1+H)M\varepsilon}{\beta} + \left\| h\right\|_{C_b(\mathbb{R},X)}\left( \frac{2\varepsilon}{\gamma} + 4(1+H) M \eta \right).
\end{eqnarray}
Therefore, for a given $\epsilon>0$, we can choose a small $\eta>0$, then a small $\varepsilon>0$ and there exists  $l_\epsilon>0$ such that every interval of length $l_\epsilon$ contains at least a number $T$ and the following inequality holds
\begin{equation*}
\left\|S(v)(t+T) - S(v)(t)\right\|_X\leqslant \epsilon
\end{equation*}
for $t\in \mathbb{R}$. This shows that the solution operator maps an almost periodic function to an almost periodic function, i.e., $S:AP(\mathbb{R},X)\to AP(\mathbb{R},X)$. Hence, the existence of almost periodic mild solution is proven. The uniqueness holds clearly.

{\bf Issue 2. Now we prove the assertion of asymptotically almost periodic case.} 
From the existence and uniqueness of mild solution on the half line time-axis otained in Assertion $(i)$, we can define the solution operator $\mathbb{S}:C_b(\mathbb{R}_+,X)\to C_b(\mathbb{R}_+,X)$ associating with the linear equation \eqref{linearAbstract} as follows 
\begin{equation*}\label{SolOp-1}
\mathbb{S}(v)(t) =u(t) = U(t,0)\zeta_0  + \int_0^{+\infty} G(t,s)h(s,v(s))ds \hbox{   where   } \zeta_0 \in X_0 = P(0)X.
\end{equation*}
Since $v\in AAP(\mathbb{R}_+,X)$ and $h(\cdot,v(\cdot))\in AAP(\mathbb{R}_+,X)$, we assume that $h(\cdot,v(\cdot))=h_1(\cdot,v(\cdot))+h_2(\cdot,v(\cdot)) \in AAP(\mathbb{R}_+,X)$, where $h_1(\cdot,v(\cdot))\in AP(\mathbb{R},X)$ and $h_2\in C_0(\mathbb{R}_+,X)$. We will show that $\mathbb{S}(v) \in AAP(\mathbb{R}_+,X)$. Indeed, we can rewrite 
\begin{eqnarray}
\mathbb{S}(v)(t) &=& U(t,0)\zeta_0 + \int_0^{+\infty} G(t,s)h(s,v(s))ds \cr
&=& U(t,0)\zeta_0 + \int_0^{+\infty} G(t,s)[h_1(s,v(s)) + h_2(s,v(s))] ds\cr
&=& \int_{-\infty}^{+\infty} G(t,s)h_1(s,v(s)) ds \cr
&&+ \left( U(t,0)\zeta_0 -\int_{-\infty}^0G(t,s)h_1(s,v(s)) ds + \int_0^\infty G(t,s)h_2(s,v(s)) ds \right)\cr
&=& \mathbb{S}_1(v)(t) + \mathbb{S}_2(v)(t),
\end{eqnarray} 
where
\begin{equation}
\mathbb{S}_1(v)(t) = \int_\mathbb{R} G(t,s)h_1(s,v(s)) ds
\end{equation}
and 
\begin{equation}
\mathbb{S}_2(v)(t) = U(t,0)\zeta_0 -\int_{-\infty}^0G(t,s)h_1(s,v(s)) ds + \int_0^\infty G(t,s)h_2(s,v(s)) ds.
\end{equation}
By the same way as \eqref{AP1} we can estimate
\begin{equation}
\left\|\mathbb{S}_1(v)(t+T) - \mathbb{S}_1(v)(t) \right\|_X \leqslant \frac{2(1+H)M\varepsilon}{\beta} + \left\| h_1 \right\|_{C_b(\r,X)}\left( \frac{2\varepsilon}{\gamma} + 4(1+H) M \eta \right)
\end{equation}
for the funtion $h_1(\cdot,v(\cdot))\in AP(\mathbb{R},X)$ satisfies
\begin{equation}
\left\|h_1(t+T, v(t+T)) - h_1(t,v(t))\right\|_X\leqslant \epsilon.
\end{equation}
This shows that $\mathbb{S}_1(v)\in AP(\r,X)$.

Now, we prove that $\lim\limits_{t\to +\infty}\norm{\mathbb{S}_2(v)(t)}_X=0$. Clearly, the first term of $\mathbb{S}_2(v)$ can be estimated as
\begin{equation}
\|U(t,0)\zeta_0 \|_X \leqslant Ne^{-\beta t}\norm{\zeta_0}_X \longrightarrow 0 \hbox{   as   } t\to +\infty.
\end{equation}
Moreover, we can also estimate the second term of $\mathbb{S}_2(v)$ as 
\begin{eqnarray}
\left\|\int_{-\infty}^0G(t,s)h_1(s,v(s)) ds\right\| &\leqslant& \int_{-\infty}^0 \left\| G(t,s)h_1(s,v(s))\right\|_X ds\cr
&\leqslant& (1+H)N\int_{-\infty}^0 e^{-|t-s|} \| h_1(s,v(s))\|_X ds\cr
&\leqslant& (1+H)N\|h_1\|_{C_b(\mathbb{R},X)} \int_0^{+\infty}e^{-(t+s)}ds\cr
&\leqslant& (1+H)N\|h_1\|_{C_b(\mathbb{R},X)} e^{-t}\cr
&\longrightarrow& 0 
\end{eqnarray}
as $t$ tends to infinity. Therefore, we remain to prove 
\begin{equation}\label{Limit}
\lim_{t\to +\infty}\int_0^\infty \|G(t,s)h_2(s,v(s))\|_X ds =0.
\end{equation}
Indeed, from $\lim\limits_{t\to +\infty}\|h_2(t,v(t))\|_X = 0$, for each $\varepsilon>0$, there exists a constant large enough $t_0>0$ such that for all $t>t_0$, we have
\begin{equation*}
\|h_2(t,v(t))\|_X < \varepsilon.
\end{equation*}
This leads to the following estimates
\begin{align}
&\int_0^\infty \|G(t,s)h_2(s,v(s))\|_X ds\nonumber\\
&\quad\quad\leqslant \int_0^{t_0} \|G(t,s)h_2(s,v(s))\|_X ds + \int_{t_0}^\infty \|G(t,s)h_2(s,v(s))\|_X ds \nonumber\\
&\quad\quad\leqslant \int_0^{t_0} \|G(t,s)h_2(s,v(s))\|_X ds + \int_{t_0}^\infty Ne^{-\beta |t-s|}\|h_2(s,v(s))\|_X ds \nonumber\\
&\quad\quad\leqslant \int_0^{t_0} \|G(t,s)h_2(s,v(s))\|_X ds + \varepsilon\int_{t_0}^\infty Ne^{-\beta |t-s|} ds \nonumber\\
&\quad\quad\leqslant \int_0^{t_0} (1+H)Ne^{-\beta|t-s|}\|h_2(s,v(s))\|_X ds + \varepsilon\int_{t_0}^\infty (1+H)Ne^{-\beta |t-s|} ds \nonumber\\
&\quad\quad\leqslant (1+H)N \left(\frac{\left(-e^{-\beta|t|} + e^{-\beta|t-t_0|}\right) \|h_2\|_{C_b(\mathbb{R}_+,X)}}{\beta} +  \frac{2(1-e^{-\beta |t-t_0|})\varepsilon}{\beta} \right)\nonumber\\
&\quad\quad\leqslant \epsilon\nonumber
\end{align}
which holds for each $\epsilon>0$, provided that $t\gg t_0$ large enough. This shows the limit \eqref{Limit}. Our proof is complete for the case of asymptotically almost periodic functions.

{\bf Issue 3. Finally, we prove the assertion for pseudo almost periodic case.}
{Since a pseudo almost periodic function is determined on the whole line time-axis, we consider the solution operator $S: C_b(\mathbb{R},X)\to C_b(\mathbb{R},X)$ similarly as in Issue 1, i.e., {formulated} by \eqref{SolOpe1}. We will prove that this solution operator maps $PAP(\mathbb{R},X)$ into itself, then the linear equation \eqref{linearAbstract} has a unique PAP-mild solution.}

Since $h(\cdot,v(\cdot)) \in PAP(\mathbb{R},X)$, we have the followin decomposition 
$$h(s,v(s)) = g(s) + \varphi(s),\quad\text{where } g\in AP(\mathbb{R},X) \text{ and } \varphi\in PAP_0(\mathbb{R},X).$$
Then, we can rewrite $S(h)(t)$ as follows 
\begin{eqnarray}\label{solform-1}
S(v)(t) &=& \int_{\mathbb{R}} G(t, s) g(s) {d} s+\int_{\mathbb{R}} G(t, s) \varphi(s) {d} s \cr
&=& \mathcal{S}(g)(t) + \mathcal{S}(\varphi)(t),
\end{eqnarray}
where
\begin{equation}
\mathcal{S}(g)(t) = \int_{\mathbb{R}} G(t, s) g(s) {d} s \text{   and   } \mathcal{S}(\varphi)(t) = \int_{\mathbb{R}} G(t, s) \varphi(s) {d} s.
\end{equation}

Similar to Issue 1, we get that {the first function $\mathcal{S}(g)$ in \eqref{solform-1} is an almost periodic function with respect to the time variable}. In order to show that $S(v)\in PAP(\mathbb{R},X)$, {we remains to verify that the second funtion (in \eqref{solform-1}) $\mathcal{S}(\varphi)$ belongs to $PAP_0(X)$. Setting 
\begin{equation*}
I(r) = \frac{1}{2 r} \int_{-r}^r\|\mathcal{S}(\varphi)(t)\| {d} t.
\end{equation*}
We need to prove that $\lim\limits_{r\to \infty}I(r)=0$.} Indeed, we estimate that
\begin{equation*}
\begin{aligned}
I(r) &=   \frac{1}{2 r} \int_{-r}^r\left\|\int_{-\infty}^{\infty} G(t, s) \varphi(s) \mathrm{d} s\right\|_X {d} t \cr
& \leqslant  \frac{1}{2 r} \int_{-r}^r {d} t \int_{-\infty}^{+\infty}\|G(t, s) \varphi(s)\|_X {d} s \cr
&\leqslant  \frac{1}{2 r} \int_{-r}^r {d} t \int_{-\infty}^{+\infty} (1+H)N e^{-\beta|t-s|}\|\varphi(s) \|_X {d} s \hbox{      (we used \eqref{ineq-GF})}.
\end{aligned}
\end{equation*}
{By Fubini's theorem with noting that the functions $s\mapsto e^{-\beta|t-s|}\|\varphi(s)\|_X$ and $t\mapsto e^{-\beta|t-s|}\|\varphi(s)\|_X$ are integrable and by changing variable yields}
\begin{align}\label{I(r)}
I(r) &\leqslant \dfrac{(1+H)N}{2r}\int_{-r}^r {d}t\int_{-\infty}^{+\infty} e^{-\beta|s|}\|\varphi(t-s) \|_X {d} s\cr
&=(1+H)N\int_{-\infty}^{+\infty} e^{-\beta|s|}\left(\dfrac{1}{2r}\int_{-r}^r\| \varphi(t-s) \|_X  dt\right) {d} s\cr
&=(1+H)N\int_{-\infty}^{+\infty} e^{-\beta|s|}\left(\dfrac{1}{2r}\int_{-r-s}^{r-s}\|\varphi(t) \|_X dt\right) {d} s.
\end{align}
{We have clearly that}
\begin{equation*}
e^{-\beta|s|}\dfrac{1}{2r}\int_{-r-s}^{r-s}\|\varphi(t) \|_X dt \leqslant e^{-\beta|s|}\|\varphi\|_{C_b(\mathbb{R},X)} = \gamma(s).
\end{equation*}
{Since $\gamma(s)$ is integrable on $(-\infty,+\infty)$, we use the Lebesgue dominated convergence to obtain from \eqref{I(r)} that} 
\begin{eqnarray*}
0\leqslant \lim_{r\to\infty} I(r) &\leqslant& (1+H)N\lim_{r\to \infty} \int_{-\infty}^{+\infty} e^{-\beta|s|}\left(\dfrac{1}{2r}\int_{-r-s}^{r-s}\|\varphi(t) \|_X dt\right) {d} s\cr
&=&(1+H)N \int_{-\infty}^{+\infty}e^{-\beta|s|}\lim_{r\to \infty} \left(\dfrac{1}{2r}\int_{-r-s}^{r-s}\|\varphi(t) \|_X dt\right) {d} s\cr
&=& (1+H)N \int_{-\infty}^{+\infty}e^{-\beta|s|}\times 0 \, ds \hbox{       (we used $\varphi\in PAA_0(\mathbb{R}_+,X)$)}\cr
&=& 0.
\end{eqnarray*} 
This shows that $\lim\limits_{r\to \infty}I(r)=0$ and the proof is complete.
\end{proof}

\section{The semi-linear equation: well-posedness and exponential stability}\label{S4}
\subsection{Well-posedness of AP-, AAP- and PAP-mild solutions}\label{S41}
{In this section, based on the results of Theorem \ref{lem2}, we establish the well-posedness of AP-, AAP-, PAP-mild solutions and exponential stability for the following semi-linear evolution equation}
\begin{equation}\label{eq:21}
\begin{cases}
\dfrac{du}{dt}=A(t)u(t)+h(t,u(t)) \\
u(s)=\phi \in X.
\end{cases}
\end{equation}

For $\rho>0$ we denote by $B^Z_{\rho}$ the ball centered at zero and radius $\rho$ in $Z$, where $Z$ can be   $AP(\mathbb{R},X)$, $AAP(\mathbb{R}_+,X)$ or $PAP(\mathbb{R},X)$, i.e, 
$$B^Z_\rho:=\{x\in Z: \|x\|_{C_b(\mathbb{R},X)} \leqslant \rho\}.$$
{Note that, for the case of $AAP(\mathbb{R}_+,X)$ we need the condition $\|x\|_{C_b(\mathbb{R}_+,X)}\leqslant \rho$ in $B_\rho^Z$. However, by the similarity of proofs we denote the ball in the same way.}

Moreover, we add the following conditions on the nonlinear term.
\begin{assumption}\label{nonlinear}
The function $h:\mathbb{R} \times X\rightarrow X$ satisfies
 \begin{itemize}
\item[$(i)$] For $u\in Z$, $h(\cdot,u(\cdot))\in Z$, 
\item[$(ii)$]  $\|h(\cdot,0)\|_{C_b(\mathbb{R},X)}\leqslant\gamma$, where $\gamma$ is a non-negative constant,
\item[$(iii)$] There exist  positive constants  $\rho$ and $L$ such that
$$ \|h(t, v_1(t))-h(t, v_2(t))\|_{X}\leqslant L\|v_1(t)-v_2(t)\|_{X}$$
for each $t\in \mathbb{R}$ and all $v_1, v_2\in C_b(\mathbb{R},X) \hbox{ with }\|v_1\|_{C_b(\mathbb{R},X)} \leqslant \rho$ and  $\|v_2\|_{C_b(\mathbb{R},X)} \leqslant \rho.$
\end{itemize}
\end{assumption}
 
From the proof of Theorem \ref{lem2}, we define the mild solution to semi-linear equation \eqref{eq:21} on the whole-line time axis by the function $u$ satisfying the following integral equation
\begin{equation}\label{eq:23}
u(t)=U(t,s)u(s)+\int\limits^{t}_{s}G(t,\tau)h(\tau,u(\tau))d\tau = \int_\mathbb{R}G(t,\tau)h(\tau,u(\tau))d\tau.
\end{equation}
{If we consider the Cauchy problem with initial data $u(0)=u_0\in X$ of equation \eqref{nonlinear}, then we can define
\begin{equation*}\label{eq:23'}
u(t)=U(t,0)\zeta_0 + \int\limits^{+\infty}_{0}G(t,\tau)h(\tau,u(\tau))d\tau,
\end{equation*}
where $\zeta_0 = u_0 - \int_0^\infty G(0,s)h(s,u(s))ds \in P(0)X$.}

{We state and prove the existence and uniqueness of the AP-, AAP- and PAP- mild solution to equation (\ref{eq:21}) in the following theorem.}
\begin{theorem}\label{th:2}
Assume that the function $h$ satisfies Assumption \ref{nonlinear}. Then, the following assertions hold
\begin{itemize}
\item[$(i)$]
If $L$ and $\gamma$ are small enough, semi-linear equation (\ref{eq:21}) has one and only one  AP- (resp. PAP-) mild solution $\hat {u}$ on a small ball of $Z$.
\item[$(ii)$]
If $\|u_0\|_X$, $L$ and $\gamma$ are small enough, semi-linear equation (\ref{eq:21}) has one and only one  AAP- mild solution $\hat {u}$ on a small ball of $Z$.
\end{itemize}

\end{theorem}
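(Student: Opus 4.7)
The plan is to prove both parts via the Banach contraction mapping principle applied to the solution operator induced by the Green-function representation of the linear problem. For part (i) on the whole line, define
\begin{equation*}
F(u)(t) := \int_{\mathbb{R}} G(t,\tau)\,h(\tau,u(\tau))\,d\tau,
\end{equation*}
and for part (ii) on the half line, define
\begin{equation*}
F(u)(t) := U(t,0)\zeta_0(u) + \int_0^{+\infty} G(t,\tau)\,h(\tau,u(\tau))\,d\tau,
\end{equation*}
where $\zeta_0(u) = u_0 - \int_0^\infty G(0,s)h(s,u(s))\,ds \in P(0)X$. The goal is to show that for $L,\gamma$ (and $\|u_0\|_X$ in case (ii)) small enough, $F$ is a self-map of some closed ball $B_\rho^Z$ and is a strict contraction there, where $Z$ is respectively $AP(\mathbb{R},X)$, $AAP(\mathbb{R}_+,X)$, or $PAP(\mathbb{R},X)$.

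First I would check that $F(B_\rho^Z) \subset Z$. By Assumption \ref{nonlinear}(i), if $u\in Z$ then $h(\cdot,u(\cdot))\in Z$, so Theorem \ref{lem2} applies directly and guarantees that $F(u)$, being the unique bounded mild solution of the linear equation driven by $h(\cdot,u(\cdot))$, lies in $Z$. Next, using Assumption \ref{nonlinear}(ii)-(iii) and the Lipschitz estimate
\begin{equation*}
\|h(\cdot,u(\cdot))\|_{C_b} \leqslant \|h(\cdot,0)\|_{C_b} + L\|u\|_{C_b} \leqslant \gamma + L\rho,
\end{equation*}
together with the a priori bound \eqref{bounded} obtained in the proof of Theorem \ref{lem2}, I would estimate
\begin{equation*}
\|F(u)\|_{C_b} \leqslant \frac{2(1+H)N}{\beta}(\gamma + L\rho)
\end{equation*}
in case (i), and
\begin{equation*}
\|F(u)\|_{C_b} \leqslant N\|u_0\|_X + \frac{3(1+H)N}{\beta}(\gamma + L\rho)
\end{equation*}
in case (ii). Choosing $\rho$ fixed and then $L,\gamma$ (and $\|u_0\|_X$) small enough, the right-hand side is bounded by $\rho$, so $F(B_\rho^Z)\subset B_\rho^Z$.

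The contraction step is similar: for $u_1,u_2\in B_\rho^Z$,
\begin{equation*}
\|F(u_1)(t)-F(u_2)(t)\|_X \leqslant \int_\mathbb{R}\|G(t,\tau)\|\cdot L\|u_1(\tau)-u_2(\tau)\|_X\,d\tau \leqslant \frac{2(1+H)NL}{\beta}\|u_1-u_2\|_{C_b};
\end{equation*}
for the half-line version there is an additional term coming from the difference $\zeta_0(u_1)-\zeta_0(u_2)$ propagated by $U(t,0)$, which is also bounded by a constant multiple of $L\|u_1-u_2\|_{C_b}$. Choosing $L$ small enough so that the total Lipschitz constant $k:=\frac{c(1+H)NL}{\beta}<1$ (with $c=2$ or $c=3$ according to the case) makes $F$ a strict contraction on $B_\rho^Z$, and the Banach fixed point theorem delivers the unique fixed point $\hat u\in B_\rho^Z$, which by construction is the desired AP-, AAP-, or PAP-mild solution.

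The only nontrivial point is the half-line case, where one must confirm that $F(u)$ actually belongs to $AAP(\mathbb{R}_+,X)$ and not merely to $C_b(\mathbb{R}_+,X)$; this is ensured by Theorem \ref{lem2}(ii), since the construction of the AAP-mild solution of the linear equation uses precisely the splitting $\zeta_0\in P(0)X$ and the decomposition of $h(\cdot,u(\cdot))$ into AP and $C_0$ parts, both of which carry over because $u\in AAP(\mathbb{R}_+,X)$ implies $h(\cdot,u(\cdot))\in AAP(\mathbb{R}_+,X)$ by Assumption \ref{nonlinear}(i). Thus all three assertions follow from the same fixed-point scheme, differing only in the function space and in whether $\|u_0\|_X$ enters the smallness condition.
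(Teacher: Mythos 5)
Your proposal is correct and follows essentially the same route as the paper: both define the solution operator via the Green-function representation $v\mapsto\int_{\mathbb{R}}G(t,\tau)h(\tau,v(\tau))\,d\tau$ (resp.\ its half-line analogue with $\zeta_0\in P(0)X$), invoke Theorem \ref{lem2} to see that the operator preserves $Z$, and then run the self-map and contraction estimates for $L,\gamma$ (and $\|u_0\|_X$) small. Your explicit treatment of the $\zeta_0(u_1)-\zeta_0(u_2)$ term in the half-line contraction estimate is a small point the paper leaves to the reader, but it does not change the argument.
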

\begin{proof}
$(i)$ We provide the detailed proof for AP- and PAP- mild solutions which are defined on the whole-line time axis. The case of AAP- mild solution defined on the half-line time axis is done in the same way with noting the intial data.
Let $v$ be a function in $B_{\rho}^Z$. Consider the integral equation
\begin{equation}\label{eq:24}
u(t)= \int_\mathbb{R} G(t,\tau)h(\tau,v(\tau)d\tau.
\end{equation}
{Applying Theorem \ref{lem2}, there exists a unique mild solution $u \in Z$ to equation (\ref{eq:24}). Therefore, we can set a map $\Phi: Z\to Z$ by $\Phi(v)(t)=u(t)$ which is solution of equation \eqref{eq:24}.}

In order to establish the well-posedness of equation \eqref{eq:23}, we prove that if $L$ and $\rho$ are small enough, then the map $\Phi$ acts from $B^Z_{\rho}$ into itself and is a contraction. 

We have
\begin{align*}\label{eq:25}
\|h(t, v(t))\|_{X} &\leqslant \|h(t, v(t))-h(t,0)\|_{X}+\|h(t,0)\|_{X}\cr
&\leqslant  L\|v\|_{C_b(\mathbb{R},X)}+\gamma\cr
&\leqslant L\rho+\gamma
\end{align*}
for all $t\in \mathbb{R}$.

Applying the boundedness of mild solution in the proof of Theorem \ref{lem2}, we obtain that for
$v\in B_{\rho}$ there exists a unique AP- (resp. PAP-) mild solution $u$ to (\ref{eq:24}) satisfying
\begin{eqnarray}\label{eq:26}
 \|u\|_{C_b(\mathbb{R},X)}&\leqslant& \frac{2(1+H)N}{\beta} \|h(\cdot, v(\cdot)) \|_{C_b(\mathbb{R},X)}\cr
 &\leqslant& \frac{2(1+H)N}{\beta}(L\rho+\gamma)\cr
 &\leqslant& \rho
\end{eqnarray}
provided that $L$ and $\gamma$ small enough.
Therefore, with these values of $L$ and $\gamma$ the map $\Phi$ acts from $B_{\rho}$ into itself. 

Furthermore, for $v_1,v_2\in B_{\rho}$ 
and $u_1=\Phi(v_1)$, $u_2=\Phi(v_2)$ by the representation (\ref{eq:24}) we
obtain that $u=\Phi(v_1)-\Phi(v_2)$ is the unique AP- (resp. PAP-) mild solution to the integral equation
\begin{equation*}
u(t) = \int_{\mathbb{R}} G(t,\tau)(h(\tau, v_1(\tau)-h(\tau, v_2(\tau))d\tau \hbox{\ for all }t.
\end{equation*}
Similar as above, we can estimate
\begin{align*}\label{eq:260}
 \|\Phi(v_1)-\Phi(v_2)\|_{C_b(\mathbb{R},X)}&\leqslant \frac{2(1+H)N}{\beta} \|h(\cdot, v_1(\cdot))-h(\cdot, v_2(\cdot)) \|_{C_b(\mathbb{R},X)}\cr
&\leqslant \frac{2L(1+H)N}{\beta}\|v_1-v_2\|_{C_b(\mathbb{R},X)}.
\end{align*}
We thus obtain that if $L$ is small enough, then $\Phi$ is a contraction.

By fixed point arguments, for the above values of $L$ and $\gamma$ there exists a unique fixed point $\hat{u}$ of $B^Z_{\rho}$, and by definition of $\Phi$, this function $\hat {u}$ is the unique AP- (resp. PAP-) mild solution to integral equation \eqref{eq:24}, then semi-linear equation \eqref{eq:21}. The proof of assertion $(i)$ is complete.

$(ii)$ The well-posedness of AAP- mild solution for semi-linear equation \eqref{abstract} is done as the similar above argument with inequality \eqref{eq:26} replaced by 
\begin{eqnarray*}
\|u\|_{C_b(\mathbb{R}_+,X)} &\leqslant& N\|u_0\|_X + \frac{3(1+H)N}{\beta}\|h(\cdot,v(\cdot))\|_{C_b(\mathbb{R}_+,X)}\cr
&\leqslant& N\|u_0\|_X + \frac{3(1+H)N}{\beta}(L\rho+\gamma).
\end{eqnarray*}
Therefore, we need the small enough conditions of $\|u_0\|_X$, $L$ and $\gamma$ to guarantees the well-posedness. The details are left to the readers. The proof of Assertion $(ii)$ is complete.
\end{proof}

\subsection{Exponential stability}
In this subsection we state and prove the exponential stability of AP-, AAP- and PAP-mild solutions of equation \eqref{abstract} obtained in Theorem \ref{th:2}.
\begin{theorem}\label{stable}
Let the assumption of Theorem \ref{th:2} be satisfied with positive constants $L,\,\gamma,\,\rho$. Denote by $\hat{u}$ the unique mild solution in $B_\rho^Z(0)$ defined by Theorem \ref{th:2}. Assume further that for all functions  $x_1,x_2 \in B_{2\rho}(0)$ (where $B_{2\rho}(0)$ denotes the ball centered at zero and radius $2\rho$ in $C_b(\mathbb{R}_+,X)$), there is a positive constant $L_1$ for which 
\begin{equation*}\label{Lip2}
\|h(t,x_1(t))-h(t,x_2(t))\|_{X}\leqslant L_1\|x_1(t)-x_2(t)\|_{X}
\end{equation*}
for each $t \in \mathbb{R}_+$.

If $L_1$ is small enough, then for each $v_0\in B^X_{\frac{\rho}{2N}}(P(0)\hat{u}(0))\cap P(0)X$ (where $B^X_{\frac{\rho}{2N}}(P(0)\hat{u}(0))$ denotes the ball centered at $P(0)\hat{u}(0)$ and radius $\frac{\rho}{2N}$ in $X$), there exists a unique mild solution $u(t)$ of equation \eqref{eq:21} in $B_\rho^Z(\hat{u})$ such that $P(0)u(0)=v_0$. Moreover, the following estimate holds for $u(t)$ and $\hat{u}(t)$
\begin{equation*}
\label{stable1}
\|u(t)-\hat{u}(t)\|_X \leqslant C e^{-\mu t}\|P(0) u(0)-P(0) \hat{u}(0)\|_X
\end{equation*}
for all $t>0$. Here, the positive constants $C$ and $\mu$ are independent of $u$ and $\hat{u}$.
\end{theorem}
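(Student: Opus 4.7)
The plan has two steps: construct $u$ by a contraction centered at $\hat u$, then control $w := u - \hat u$ via the same integral identity but reworked in a weighted space. For existence, following the scheme of the proof of Theorem \ref{th:2} but centering on $\hat u$, define for $v_0 \in P(0)X$ with $\|v_0 - P(0)\hat u(0)\|_X \leq \rho/(2N)$:
\begin{equation*}
\Phi(v)(t) := U(t,0)v_0 + \int_0^{+\infty} G(t,\tau) h(\tau, v(\tau))\,d\tau,\qquad v \in B_\rho^Z(\hat u).
\end{equation*}
Every such $v$ satisfies $\|v\|_{C_b(\mathbb{R}_+,X)} \leq 2\rho$, so the enhanced Lipschitz bound with constant $L_1$ applies. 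Combining $\|U(t,0)v_0\|_X \leq N\|v_0\|_X$ (which holds since $v_0 \in P(0)X$, by the dichotomy) with the Green kernel bound \eqref{ineq-GF} yields the self-map estimate
\begin{equation*}
\|\Phi(v) - \hat u\|_{C_b(\mathbb{R}_+,X)} \leq N\|v_0 - P(0)\hat u(0)\|_X + \tfrac{2L_1(1+H)N}{\beta}\|v - \hat u\|_{C_b(\mathbb{R}_+,X)} \leq \tfrac{\rho}{2} + \tfrac{2L_1(1+H)N}{\beta}\rho,
\end{equation*}
together with a contraction estimate of ratio $2L_1(1+H)N/\beta$. For $L_1$ small enough that this ratio is below $1/2$, Banach's theorem produces a unique $u \in B_\rho^Z(\hat u)$ with $P(0)u(0) = v_0$.

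For the decay, subtracting the two fixed-point identities gives
\begin{equation*}
w(t) = U(t,0)(v_0 - P(0)\hat u(0)) + \int_0^{+\infty} G(t,\tau)\bigl[h(\tau,u(\tau)) - h(\tau,\hat u(\tau))\bigr]\,d\tau,
\end{equation*}
and, setting $a := \|v_0 - P(0)\hat u(0)\|_X$, the pointwise bound
\begin{equation*}
\|w(t)\|_X \leq Ne^{-\beta t} a + L_1(1+H)N \int_0^{+\infty} e^{-\beta|t-\tau|}\|w(\tau)\|_X\,d\tau.
\end{equation*}
Fix $\mu \in (0,\beta)$ and work in the weighted space $E_\mu := \{f \in C_b(\mathbb{R}_+,X) : \|f\|_\mu := \sup_{t \geq 0} e^{\mu t}\|f(t)\|_X < \infty\}$. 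Multiplying the inequality by $e^{\mu t}$, taking suprema, and using the kernel computation
\begin{equation*}
\int_0^{+\infty} e^{\mu(t-\tau)-\beta|t-\tau|}\,d\tau \leq \tfrac{1}{\beta-\mu} + \tfrac{1}{\beta+\mu} = \tfrac{2\beta}{\beta^2-\mu^2},
\end{equation*}
one obtains
\begin{equation*}
\|w\|_\mu \leq Na + \tfrac{2L_1(1+H)N\beta}{\beta^2-\mu^2}\|w\|_\mu,
\end{equation*}
so $\|w\|_\mu \leq Na/\bigl(1 - 2L_1(1+H)N\beta/(\beta^2-\mu^2)\bigr) =: Ca$ for $L_1$ small enough, with $C$ and $\mu$ independent of $u$ and $\hat u$. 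Translating back gives precisely the claimed exponential estimate.

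The main obstacle is justifying a priori that $w \in E_\mu$, without which the step $\|w\|_\mu \leq Na + \cdots\|w\|_\mu$ is vacuous (only $\|w\|_{C_b(\mathbb{R}_+,X)} \leq 2\rho$ is free). I would circumvent this by running the contraction argument for $w$ directly in $E_\mu$: the map
\begin{equation*}
\Psi(w)(t) := U(t,0)(v_0 - P(0)\hat u(0)) + \int_0^{+\infty} G(t,\tau)\bigl[h(\tau,\hat u(\tau)+w(\tau)) - h(\tau,\hat u(\tau))\bigr]\,d\tau
\end{equation*}
is, by the same kernel computation, a self-contraction on a small ball of $E_\mu$ for $L_1$ small. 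Its unique fixed point $w^* \in E_\mu$ also solves the $w$-equation in $C_b(\mathbb{R}_+,X)$; since $w^* + \hat u$ lies in $B_\rho^Z(\hat u)$ (for $a$ small, which holds by $\|v_0 - P(0)\hat u(0)\|_X \leq \rho/(2N)$), the uniqueness from the existence step forces $w^* = u - \hat u$, embedding $w$ into $E_\mu$ and closing the argument.
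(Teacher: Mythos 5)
Your existence step is essentially the paper's: the same contraction $F(w)(t)=U(t,0)v_0+\int_0^{+\infty}G(t,s)h(s,w(s))\,ds$ on $B_\rho^Z(\hat u)$, the same self-map and contraction constants $2(1+H)NL_1/\beta$, and the same use of $\|v_0-P(0)\hat u(0)\|_X\le \rho/(2N)$. Where you genuinely diverge is the decay estimate. The paper derives the same pointwise integral inequality $\|w(t)\|_X\le Ne^{-\beta t}a+(1+H)NL_1\int_0^{+\infty}e^{-\beta|t-s|}\|w(s)\|_X\,ds$ and then simply invokes a Gronwall-type lemma for two-sided exponential kernels (\cite[Corollary III.2.3]{Daleckii}), which outputs the specific constants $\mu=\sqrt{\beta^2-2\gamma\beta}$ and $C=2N\beta/(\beta+\sqrt{\beta^2-2\gamma\beta})$ with $\gamma=(1+H)NL_1<\beta/2$. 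You instead rerun the fixed-point argument in the weighted space $E_\mu$ and identify the weighted fixed point with $u-\hat u$ by uniqueness. Your version is self-contained (no external Gronwall lemma), and you correctly flag and repair the one real pitfall of the naive weighted-norm manipulation, namely that $\|w\|_\mu\le Na+k\|w\|_\mu$ is vacuous until one knows a priori that $w\in E_\mu$; the identification via uniqueness is the right fix, though to be airtight you should note that the existence-step contraction (and hence its uniqueness conclusion) is really taking place in the $C_b$-ball around $\hat u$, since the perturbed solution $\hat u+w^*$ need not lie in $Z$ itself when $Z=AP$ or $PAP$ (a point the paper's own proof also elides). The trade-off: the citation route gives sharper, explicit dichotomy-flavoured constants in one line; your route works for any $\mu\in(0,\beta)$ with the smallness threshold $2(1+H)NL_1\beta<\beta^2-\mu^2$ depending on the chosen $\mu$, and makes the mechanism of the decay transparent. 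Both are correct.
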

\begin{proof}
In the theorem we consider $t>0$, then the proof for AP- and PAP- mild solutions (which are defined on the whole-line time axis) and the one for AAP- mild solution (defining on the half-line time axis) are similar since we can rewrite the mild solution $u(t)$ for each $t>0$ as follows
\begin{eqnarray}\label{solre}
u(t) &=& \int_{\mathbb{R}} G(t,s)h(s,u(s))ds \cr
&=& \int_{-\infty}^0G(t,s)h(s,u(s))ds + \int_0^{+\infty} G(t,s)h(s,u(s))ds\cr
&=& U(t,0)\int_{-\infty}^0G(0,s)h(s,u(s))ds + \int_0^{+\infty} G(t,s)h(s,u(s))ds\cr
&=& U(t,0)\left(u(0) - \int_0^{+\infty} G(0,s)h(s,u(s))ds \right) + \int_0^{+\infty}G(t,s)h(s,u(s))ds\cr
&=& U(t,0)\zeta_0 + \int_0^{+\infty}G(t,s)h(s,u(s))ds,
\end{eqnarray}
where $\zeta_0 = u(0) - \int_0^{+\infty} G(0,s)h(s,u(s))ds  \in P(0)X$. 

First, we prove the existence of solution $u$. For given $v_0 \in B^X_{\frac{\rho}{2 N}}(P(0) \hat{u}(0)) \cap P(0) X$ and $\omega\in Z$, we define the transformation $F:Z\to Z$ by
$$
(F w)(t)=U(t, 0) v_0+\int_0^{+\infty} G(t, s)h(s,w(s)) ds.
$$
Note that, this definition is well-defined because of the well-posedness of linear integral equation obtained in Theorem \ref{rem}.
We will prove that $F$ acts from $B^Z_\rho(\hat{u})$ into itself and is a contraction. Combining with fixed point arguments we have the existence of solution $u$.

Let $w \in B^Z_\rho(\hat{u})$. We have
$$
\|w\|_{C_b(\mathbb{R},X)} \leqslant\|w-\hat{u}\|_{C_b(\mathbb{R},X)}+\|\hat{u}\|_{C_b(\mathbb{R},X)} \leqslant 2 \rho.
$$
It is easy to see that
$$\hat{u}(0) - \int_0^{+\infty}G(0,s)h(s,\hat{u}(s))ds = P(0)\hat{u}(0).$$
Thus, we get from \eqref{solre} that
$$\hat{u}(t)=U(t,0)P(0)\hat{u}(0) + \int_0^{+\infty}G(t,s)h(s,\hat{u}(s))ds.$$
Hence, by setting
$$
y(t)=(F w)(t)=U(t, 0) v_0+\int_0^{+\infty} G(t,s)h(s,w(s)) ds,
$$
we obtain from Assumption \ref{dich} and inequality \eqref{ineq-GF} that
\begin{align*}
\|y(t)-\hat{u}(t)\|_X & \leqslant N e^{-\beta t}\left\|v_0-P(0) \hat{u}(0)\right\|_X\cr
&\quad+(1+H) N \int_0^{+\infty} e^{-\beta|t-s|} \|h(s,w(s))-h(s,\hat{u}(s))\|_{X} ds\cr
& \leqslant N\left\|v_0-P(0) \hat{u}(0)\right\|_X+\frac{2(1+H) N L_1 \rho}{\beta}
\end{align*}
for all $t \geqslant 0$. This together with the fact that $\left\|v_0-P(0) \hat{u}(0)\right\|_X \leqslant \dfrac{\rho}{2 N}$ yield
\begin{align*}
\|F w-\hat{u}\|_{C_b(\mathbb{R},X)} &\leqslant N\left\|v_0-P(0) \hat{u}(0)\right\|_X+\frac{2(1+H) N L_1 \rho}{\beta}\cr
&\leqslant \dfrac{\rho}{2} + \frac{2(1+H) N L_1 \rho}{\beta}.
\end{align*}
Therefore, if $L_1$ is small enough satisfying $\dfrac{2(1+H) N L_1}{\beta}<\dfrac{1}{2}$, then the transformation $F$ acts from $B^Z_\rho(\hat{u})$ into itself.

Let $x, z \in B^Z_\rho(\hat{u})$, we have $x,z\in B^Z_{2\rho}(0)$. Then, assumption \eqref{Lip2} implies that
\begin{align*}
\|(F x)(t)-(F z)(t)\|_X & \leqslant \int_0^{+\infty}\|G(t,s)\| \| h(s,x(s))-h(s,z(s)) \|_X ds \cr
& \leqslant(1+H) N L_1\int_0^{+\infty} e^{-\beta|t-s|} \|x(s)-z(s)\|_{X} ds.
\end{align*}
Therefore,
\begin{equation*}
\|F x-F z\|_{C_b(\mathbb{R},X)} \leqslant \frac{2(1+H) N L_1}{\beta}\|x-z\|_{C_b(\mathbb{R},X)}.
\end{equation*}
Since $\dfrac{2(1+H) N L_1}{\beta}<\dfrac{1}{2}$, we obtain that $F: B^Z_\rho(\hat{u}) \rightarrow B^Z_\rho(\hat{u})$ is a contraction. This guarantees the existence of a unique $u \in B^Z_\rho(\hat{u})$ such that $F u=u$. By the definition of $F$, we imply that $u$ is the unique solution in $B^Z_\rho(\hat{u})$ of equations \eqref{eq:21}.

Finally, we prove the stable inequality \eqref{stable1}. By formula \eqref{solre}, we can write
\begin{equation*}
u(t)-\hat{u}(t)=U(t, 0)(P(0) u(0)-P(0) \hat{u}(0))+\int_0^{+\infty} G(t,s)(h(s,u(s))-h(s,\hat{u}(s))) ds.
\end{equation*}
Since $u,\hat{u}\in B^Z_\rho(0)$, it follows from Assumption \ref{nonlinear} and \eqref{dich} that
\begin{eqnarray*}
\|u(t)-\hat{u}(t)\|_X & \leqslant& N e^{-\beta t}\|P(0) u(0)-P(0) \hat{u}(0)\|_X\cr
&&+(1+H) N \int_0^{+\infty} e^{-\beta|t-s|}\|h(s,u(s))-h(s,\hat{u}(s))\|_X ds \cr
& \leqslant& N e^{-\beta t}\|P(0) u(0)-P(0) \hat{u}(0)\|_X\cr
&&+(1+H) N L_1 \int_0^{+\infty} e^{-\beta|t-s|}\|u(s)-\hat{u}(s)\|_X ds.
\end{eqnarray*}
Applying now a Gronwall-typed inequality \cite[Corollary III.2.3]{Daleckii},  we obtain for $\gamma=(1+H) N L_1<\dfrac{\beta}{2}$ that
\begin{equation*}
\|u(t)-\hat{u}(t)\|_X \leqslant C e^{-\mu t}\|P(0) u(0)-P(0) \hat{u}(0)\|_X 
\end{equation*}
for  $\mu:=\sqrt{\beta^2-2 \gamma \beta},\; C:=\frac{2 N \beta}{\beta+\sqrt{\beta^2-2 \gamma \beta}}
$. The proof is complete.
\end{proof}

\section{Constructions of Examples and Applications}\label{S5}
\subsection{The scalar Li\'enard equation}\label{APP}
In this subsection we comeback to establish the well-posedness of AP-, AAP and PAP- mild solutions and their stability of the scalar Li\'enard equation by applying abstract results obtained in Section \ref{S3} and Subsection \ref{S41}. {Note that, the original scalar Li\'enard equation \eqref{eq:2.2matrix} is in fact linear evolution equation under form \eqref{linearAbstract}. Therefore, we can use Theorem \ref{lem2} to obtain the well-posedness of equation \eqref{eq:2.2matrix}.}
However, in the wide of application of abstract results we consider the following scalar Li\'enard equation (which is in fact semilinear equation):
\begin{equation}\label{LienardEq}
\frac{d}{dt}\begin{bmatrix}
x\\ 
y
\end{bmatrix}
=A(t)\begin{bmatrix}
 x\\
y
\end{bmatrix} + \begin{bmatrix}
0\\ {-g(t,0)} + e(t,x)
\end{bmatrix},
\end{equation}
with $f(x(t))$ replaced by $f(t,x(t))$ and $e(t)$ replaced by $e(t,x(t))$ and
$$A(t) = \int_0^1
\begin{bmatrix}
-f(t,s{\alpha}(t)) &1\\
-g_x(t,s{\alpha(t)}) &0
\end{bmatrix}
ds.$$
We consider $f(t,x) = q(t)$ and $g(t,x) =cx(t) + r(t)x(t)$, where $q$ and $r$ are almost periodic functions and $c$ is a suitable constant. Then, we have
\begin{equation*}
A(t) = \int_0^{\alpha(t)} \begin{bmatrix}
-q(t) &1\\
-c-r(t) &0
\end{bmatrix}
ds = \begin{bmatrix}
-q(t)\alpha(t) &1\\
-(c+r(t))\alpha(t) &0
\end{bmatrix}.
\end{equation*}
Therefore, the acting of $A(t)$ on a vector $\begin{bmatrix}
x\\
y
\end{bmatrix}$ can be rewritten as
\begin{equation*}
A(t)\begin{bmatrix}
x\\
y
\end{bmatrix}= \begin{bmatrix}
-q(t)\alpha(t) &1\\
-(c+r(t))\alpha(t) &0
\end{bmatrix}\begin{bmatrix}
x\\ y
\end{bmatrix}=
\begin{bmatrix}
-q(t) &1\\
-(c+r(t)) &0
\end{bmatrix} \begin{bmatrix}
x\\
y
\end{bmatrix}.  
\end{equation*}
Since the functions $q$ and $r$ are almost periodic, we have $A(t) = \begin{bmatrix}
-q(t) &1\\
-(c+r(t)) &0
\end{bmatrix}$ is almost periodic. 

The following lemma (see \cite[Lemma 1]{Zhu}) provides the condition of functions $f$ and $g$ which guarantees that there is a evolution family $\left\{ U(t,s)\right\}_{t\geqslant s}$ satisfied Assumption \ref{dich} that is associated with the linear equation 
\begin{equation}\label{LienardEq1}
\frac{d}{dt}\begin{bmatrix}
x\\ 
y
\end{bmatrix}
=A(t)\begin{bmatrix}
 x\\
y
\end{bmatrix}.
\end{equation}
\begin{lemma}\label{lem1} 
Assume that there exist positive constants $M, \delta,$ such that for any $x\in \mathbb{R}$ satisfying
\begin{itemize}
\item[$(i)$] $|f(t,x)| + |g_x(t,x)|\leqslant M;$
\item[$(ii)$] $4g_x(t,x)+f^2(x)<-\delta;$
\end{itemize}
then there is a evolution family $\left\{ U(t,s)\right\}_{t\geqslant s}$ satisfied Assumption \ref{dich} that is associated with the linear equation \eqref{LienardEq1}.
\end{lemma}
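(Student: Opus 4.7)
The plan is to combine classical non-autonomous linear ODE theory with a time-dependent indefinite quadratic Lyapunov form that encodes the pointwise hyperbolic structure provided by condition (ii).

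First, condition (i) together with the integral formula defining $A(t)$ gives a uniform operator bound $\|A(t)\|\leqslant M'$ on $\mathbb{R}^2$. Standard Carath\'eodory/Picard--Lindel\"of theory for linear time-dependent ODEs in $\mathbb{R}^2$ then produces a unique propagator $\{U(t,s)\}_{t\geqslant s}$ enjoying the cocycle identity $U(t,s)=U(t,r)U(r,s)$, strong continuity in $(t,s)$, and (via Gronwall) the exponential bound $\|U(t,s)\|\leqslant e^{M'(t-s)}$ needed for Definition \ref{EvolutionFa}. It therefore remains only to construct the dichotomy projections and verify the exponential estimates $(i)$--$(iv)$ of Assumption \ref{dich}.

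At each fixed $t$, the characteristic polynomial of $A(t)$ is $\lambda^{2}+\bar f(t)\lambda+\bar g(t)=0$, where $\bar f(t)=\int_0^1 f(t,s\alpha(t))\,ds$ and $\bar g(t)=\int_0^1 g_x(t,s\alpha(t))\,ds$. Condition (ii) forces $\bar f(t)^{2}-4\bar g(t)\geqslant\delta$, so the eigenvalues
\[
\lambda_{\pm}(t)=\tfrac{1}{2}\bigl(-\bar f(t)\pm\sqrt{\bar f(t)^{2}-4\bar g(t)}\bigr)
\]
are real, distinct and of opposite signs, with uniform gap $\lambda_{+}(t)-\lambda_{-}(t)\geqslant\sqrt{\delta}$ and uniform upper bound $|\lambda_{\pm}(t)|\leqslant M'$. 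This pointwise saddle structure is necessary but, in the non-autonomous setting, not sufficient for exponential dichotomy, so one must promote it to a uniform estimate along trajectories.

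The central step is to build a symmetric indefinite quadratic form $V(t,u)=\langle H(t)u,u\rangle$ with $H(t)$ uniformly bounded and uniformly non-degenerate (so that the stable and unstable cones $\{V(t,\cdot)<0\}$, $\{V(t,\cdot)>0\}$ define one-dimensional subspaces in a uniform way) and satisfying the Lyapunov-type inequality
\[
H'(t)+A(t)^{\top}H(t)+H(t)A(t)\;\leqslant\;-\nu\,|H(t)|\qquad(\nu>0),
\]
taken in the appropriate quadratic-form sense on each cone. A convenient ansatz is
\[
H(t)=\begin{pmatrix}\bar g(t)+\varepsilon & \bar f(t)/2\\ \bar f(t)/2 & 1\end{pmatrix},
\]
for which $\det H(t)=\bar g(t)+\varepsilon-\bar f(t)^{2}/4$ is, by (ii), uniformly strictly negative once $\varepsilon$ is small; a direct computation shows that $H'+A^{\top}H+HA$ is controlled by $-(4\bar g+\bar f^{2})\geqslant\delta$, which yields the Lyapunov inequality after calibration of $\varepsilon$.

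With $V$ in hand, one defines $P(t)\mathbb{R}^{2}$ as the one-dimensional subspace of initial data whose forward trajectory stays in the stable cone (so $V$ decays exponentially forward), and $Q(t)\mathbb{R}^{2}$ analogously backward in time; these are complementary by construction, invariance $P(t)U(t,s)=U(t,s)P(s)$ follows from the dynamical definition, invertibility of $U_{Q}(t,s)$ on the unstable line follows from the uniform non-degeneracy of $H(t)$ there, and the estimates $\|U_{P}(t,s)\|\leqslant Ne^{-\beta(t-s)}$, $\|U_{Q}(s,t)\|\leqslant Ne^{-\beta(t-s)}$ come from exponential decay of $V$ combined with equivalence of the $V$-norm and the Euclidean norm on each one-dimensional subspace. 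The main obstacle will be the calibration step: choosing $\varepsilon$ (and, if needed, a scalar multiplier $\rho(t)$ in front of $H$) so that both the uniform non-degeneracy of $H(t)$ and the sign of $H'+A^{\top}H+HA$ are preserved simultaneously in $t$; this is precisely where the strict inequality in condition (ii) is used.
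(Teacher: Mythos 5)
The paper offers no argument for this lemma at all --- its ``proof'' is the single sentence that the result was established in \cite{Zhu} --- so there is nothing to match your attempt against except the literature. Your overall strategy is a legitimate classical one: condition $(i)$ gives $\|A(t)\|\leqslant M'$, hence a unique exponentially bounded evolution family by Picard--Lindel\"of and Gronwall; condition $(ii)$ gives a uniform pointwise saddle for $A(t)$; and you correctly note that pointwise hyperbolicity is not enough in the nonautonomous setting, so that an indefinite quadratic Lyapunov form (in the spirit of Coppel and Daleckii--Krein) is the right tool to produce the dichotomy projections and the estimates of Assumption \ref{dich}.

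However, the central computation on which everything rests is wrong. Writing $A(t)=\begin{pmatrix}-\bar f&1\\-\bar g&0\end{pmatrix}$ and taking your ansatz $H(t)=\begin{pmatrix}\bar g+\varepsilon&\bar f/2\\ \bar f/2&1\end{pmatrix}$, a direct computation gives
\begin{equation*}
A^{\top}H+HA=\begin{pmatrix}-\bar f\,(3\bar g+2\varepsilon)&\;\varepsilon-\bar f^{2}/2\\ \varepsilon-\bar f^{2}/2&\;\bar f\end{pmatrix},
\end{equation*}
whose $(2,2)$ entry is $\bar f(t)$ and whose trace is $\bar f\,(1-3\bar g-2\varepsilon)$. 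Since $(ii)$ forces $\bar g<0$, the trace has the sign of $\bar f$, so whenever $\bar f(t)>0$ (the standard damped case, and in particular the paper's own choice $f=q(t)=\sin t+\sin(\sqrt2\,t)$ at times where $q>0$) this form cannot be negative semidefinite; even your weaker cone-wise version fails, since testing on $e_2=(0,1)^{\top}$ gives $\langle(A^{\top}H+HA)e_2,e_2\rangle=\bar f>0$ while the right-hand side $-\nu\,|H(t)|$ is nonpositive. Thus the claimed control by $-(4\bar g+\bar f^{2})$ does not hold, and the calibration step you yourself flag as the main obstacle does not go through with this $H$. A secondary gap: the term $H'(t)$ presupposes differentiability of $\bar f,\bar g$ in $t$, which is not assumed (the coefficients are merely continuous or almost periodic). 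To repair the argument you would need a genuinely different form --- for instance one adapted to the eigenbasis of $A(t)$, or the invariant cone fields obtained from the Riccati equation $m'=-\bigl(m^{2}-\bar f m+\bar g\bigr)$ for the slope $m=y/x$, whose two real equilibria are separated by $\sqrt{\bar f^{2}-4\bar g}\geqslant\sqrt{\delta}$ thanks to $(ii)$ --- or else simply defer to \cite{Zhu}, as the paper does.
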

\begin{proof}
The proof was given in \cite{Zhu}.
\end{proof}
To guarantee the conditions in above lemma we can choose 
\begin{equation}\label{Ffunc}
f(t,x)= q(t)  = \sin t + \sin(\sqrt 2 t)\hbox{    for    } \sup_{t\in \mathbb{R}}|q(t)|< 2.
\end{equation}
and   
\begin{equation}\label{Gfunc}
g(t,x) = -2x(t) + r(t)x(t),
\end{equation}
where $r(t)=\cos t + \cos \sqrt{2} t$ for    $\sup\limits_{t\in \mathbb{R}} |r(t)| < 2$.
We can verify easly the condition in Lemma \ref{lem1} as 
\begin{eqnarray*}
&&|f(x)| + |g_x(t,x)| = |q(t)| + \left| -2 + r(t)\right| <6,\cr
&& 4g_x(t,x) + f^2(x) = -8 + r(t) + |q(t)|^2 < -2.
\end{eqnarray*}
{From the above construction, we have the existence of evolution family $\left\{ U(t,s)\right\}_{t\geqslant s}$ satisfying  exponential dichtonomy in Assumption \ref{dich}. Moreover, since the operator $A(t)$ is almost periodic, the Green function $G(t,s)$ associating with $U(t,s)$ satisfies almost periodic property in Assumption \ref{GreenAss}.}

In view of definition of mild solutions in Subection \ref{S41}, The mild solution of equation \eqref{LienardEq} on the whole line time-axis is defined by
\begin{eqnarray}\label{AppMild1}
\begin{bmatrix}
x\\y
\end{bmatrix}(t) &=& U(t,s)u(s) + \int_s^t U(t,s)\begin{bmatrix}
0\\x(s)+e(s)
\end{bmatrix}ds \cr
& = & \int_{\mathbb{R}} G(t,s)\begin{bmatrix}
0\\g(s,0) +e(s,x(s))
\end{bmatrix}ds\cr
& = & \int_{\mathbb{R}} G(t,s)\begin{bmatrix}
0\\e(s,x(s))
\end{bmatrix}ds \hbox{     (because $g(s,0)=0$)},
\end{eqnarray}
where $G(t,s)$ is Green function associating with $U(t,s)$.

On the other hand, if we consider Cauchy problem of equation \eqref{LienardEq} with the initial data $\begin{bmatrix}
x(0)\\y(0)
\end{bmatrix}=\begin{bmatrix}
x_0\\y_0
\end{bmatrix}$, then the mild solution (on the half line time-axis) is defined by
\begin{eqnarray}\label{AppMild2}
\begin{bmatrix}
x\\y
\end{bmatrix}(t) &=& U(t,0)\begin{bmatrix}x_0\\y_0 \end{bmatrix} + \int_0^t U(t,s)\begin{bmatrix}
0\\ g(s,0) + e(s,x(s))
\end{bmatrix}ds \cr
&=& \zeta_0 + \int_0^\infty G(t,s) \begin{bmatrix}
0\\ e(s,x(s))
\end{bmatrix}ds,
\end{eqnarray}
where $\zeta_0 = \begin{bmatrix}x_0\\y_0 \end{bmatrix} - {\int}_0^\infty G(0,s)\begin{bmatrix}
0\\ e(s,x(s))
\end{bmatrix}ds$.

To guarantees the conditions in Assumption \ref{nonlinear} are valid for the function $h(s)= \begin{bmatrix} 0\\  e(s,x(s))\end{bmatrix}$, we assume that $e(s,x(s))$ satisfies Assumption \ref{nonlinear}. It is easly to choose such functions, for example:  $e(s,x(s)) = |x(s)|^{m-1} x(s) + k(s)$ for $m\geqslant 2$ and $k\in C_b(\mathbb{R},\mathbb{R})$.

With the above functions $f$, $g$ and $e$ on hand, we apply abstract results obtained in Theorem \ref{th:2} and Theorem \ref{stable} to obtain the results for scalar Li\'enard equation \eqref{LienardEq} in the following theorem. 
\begin{theorem}\label{Appl1}
The following assertions hold
\begin{itemize}
\item[$(i)$]
Let $f$ and $g$ be functions as \eqref{Ffunc} and \eqref{Gfunc}, respectively. If the Lipschitz constants and $|k(0)|$ are small enough and $k \in C_b(\mathbb{R},\mathbb{R})$ be a given AP- (resp. PAP-) function satifying Assumption \ref{nonlinear}, semilinear equation (\ref{LienardEq}) has one and only one  AP- (resp. PAP-) mild solution $\hat {u}$ satisfying \eqref{AppMild1} on a small ball of $AP(\mathbb{R},\mathbb{R})$ (resp. $PAP(\mathbb{R},\mathbb{R})$). Moreover, these mild solutions are exponential stable in the sense of Theorem \ref{stable}.
\item[$(ii)$]
Let $f$ and $g$ be functions as \eqref{Ffunc} and \eqref{Gfunc}, respectively and $k \in C_b(\mathbb{R}_+,\mathbb{R})$ be a given AAP- function satisfying Assumption \ref{nonlinear}. If the Lipschitz constants, $|u_0|$ and $|k(0)|$, are small enough, semilinear equation (\ref{LienardEq}) has one and only one  AAP- mild solution $\hat {u}$ satisfying \eqref{AppMild2} on a small ball of $AAP(\mathbb{R}_+,\mathbb{R})$. Moreover, the mild solution is exponential stable in the sense of Theorem \ref{stable}.
\end{itemize}
\end{theorem}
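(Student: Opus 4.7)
The overall strategy is purely an application of the abstract machinery: verify each hypothesis of Theorem \ref{th:2} and Theorem \ref{stable} for the specific data $f,g,e$ prescribed in \eqref{Ffunc}, \eqref{Gfunc}, and then invoke those theorems to conclude existence, uniqueness and exponential stability of the AP-, AAP- and PAP- mild solutions. First I would recall that, with the explicit choices $q(t)=\sin t + \sin(\sqrt 2 t)$ and $r(t)=\cos t+\cos(\sqrt 2 t)$, the operator $A(t)$ becomes almost periodic with values in $B(\mathbb{R}^2)$, and the coefficient bounds $|q(t)|<2$, $|r(t)|<2$ give the inequalities $|f|+|g_x|<6$ and $4g_x+f^2<-2$ needed by Lemma \ref{lem1}. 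Consequently there exists an evolution family $\{U(t,s)\}_{t\geqslant s}$ associated with \eqref{LienardEq1} which satisfies the exponential dichotomy of Assumption \ref{dich} (with explicit projections $P(t)$ and constants $N,\beta,H$).

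Next I would check Assumption \ref{GreenAss}. Since $A(\cdot)$ is almost periodic (indeed a finite sum of periodic matrix-valued functions with incommensurate periods), Remark \ref{rem}(ii) applies directly, so the Green function $G(t,s)$ constructed from $U(t,s)$ via \eqref{GreenForm} is exponentially almost periodic in the sense of Assumption \ref{GreenAss}. Thus all the \emph{linear} hypotheses of Theorem \ref{lem2}, Theorem \ref{th:2} and Theorem \ref{stable} are in force.

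The one genuine bit of verification is Assumption \ref{nonlinear} for the forcing nonlinearity $h(t,u)=\begin{bmatrix}0\\ e(t,x(t))\end{bmatrix}$. For the suggested model $e(t,x)=|x|^{m-1}x+k(t)$ with $m\geqslant 2$ and $k\in Z$ (respectively $AP$, $AAP$, $PAP$), I would check: (i) closure of $Z$ under composition, i.e.\ $x\in Z \Rightarrow |x|^{m-1}x+k\in Z$, which follows from the algebra/Banach-module properties of the spaces $AP,AAP,PAP$ recorded in the preliminaries; (ii) $\|h(\cdot,0)\|_{\infty}=\|k\|_{\infty}\leqslant\gamma$ with $\gamma$ controlled by $|k(0)|$ plus oscillation; and (iii) the local Lipschitz estimate $\||x_1|^{m-1}x_1-|x_2|^{m-1}x_2\|\leqslant m(2\rho)^{m-1}|x_1-x_2|$ on the ball of radius $\rho$, giving a Lipschitz constant $L=m(2\rho)^{m-1}$ that can be made small by taking $\rho$ small. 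The same bound yields the global Lipschitz constant $L_1$ on $B_{2\rho}$ required by Theorem \ref{stable}, and it can be made $<\beta/(4(1+H)N)$ by further shrinking $\rho$.

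With these verifications in hand, assertion (i) follows by applying Theorem \ref{th:2}(i) to produce a unique AP- (resp.\ PAP-) mild solution $\hat u\in B_\rho^Z$ of \eqref{LienardEq} in the Green-function representation \eqref{AppMild1}, and then Theorem \ref{stable} gives the exponential stability estimate in $B_\rho^Z(\hat u)$. Assertion (ii) is identical, applied on the half-line with the initial data $u_0=(x_0,y_0)^{\top}$; the smallness of $\|u_0\|$ is precisely the hypothesis invoked in Theorem \ref{th:2}(ii) to absorb the $N\|u_0\|_X$ term appearing in the half-line estimate. I expect the only delicate book-keeping to be in item (iii): checking that the composition $t\mapsto |x(t)|^{m-1}x(t)$ preserves the PAP class and that the Lipschitz constant really controls the functional norms in $Z$, but this is standard once one uses the decomposition $PAP=AP\oplus PAP_0$ together with the fact that $PAP_0$ is an ideal under multiplication by bounded AP functions.
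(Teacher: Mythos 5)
Your proposal is correct and follows essentially the same route as the paper: the paper's ``proof'' consists exactly of the pre-theorem construction in Subsection \ref{APP} (verifying Lemma \ref{lem1} with the explicit $q,r$, deducing Assumption \ref{GreenAss} from the almost periodicity of $A(t)$ via Remark \ref{rem}, and assuming/exhibiting $e$ satisfying Assumption \ref{nonlinear}), followed by a direct appeal to Theorem \ref{th:2} and Theorem \ref{stable}. If anything, you supply slightly more detail than the paper on the verification of Assumption \ref{nonlinear} (the explicit local Lipschitz constant $m(2\rho)^{m-1}$ and the closure of $AP$, $AAP$, $PAP$ under the composition $x\mapsto |x|^{m-1}x$), which the paper leaves to the reader.
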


Another way to apply the abstract results and give an example for abstract results as follows (see \cite{Xu}) by setting
\begin{eqnarray*}
&&y(t) = x'(t) + a(t)x(t) - \theta_1(t),\cr
&&\theta_2(t) = a(t)\theta_1(t)-\theta_1'(t) + e(t,x(t)).
\end{eqnarray*}
Then, the scalar Li\'enard equation
\begin{equation*}
x'' + f(x)x' + g(x) = e(t,x)
\end{equation*}
is equivalent to the following system
\begin{equation}\label{Re-Formula}
\begin{cases}
    x'(t) = -a(t)x(t) + y(t) + \theta_1(t),\\
     y'(t)= -a^2(t)x(t) + a(t)y(t) - f(x(t))[y(t)-a(t)x(t)+\theta_1(t)] + g(t,x(t)) + \theta_2(t).
 \end{cases}
\end{equation}
In matrix form, we can rewrite \eqref{Re-Formula} as
\begin{equation*}
\frac{d}{dt}\begin{bmatrix}
x\\y
\end{bmatrix}(t) = \mathbb{A}(t)\begin{bmatrix}
x\\y
\end{bmatrix} (t)+ \mathbb{H}\left(t, \begin{bmatrix}x\\y\end{bmatrix}\right),
\end{equation*}
where 
$$\mathbb{A}(t) = \begin{bmatrix}
-a(t)&&0\\0&&a(t)
\end{bmatrix},$$
$$\mathbb{H}\left(\begin{bmatrix}x\\y\end{bmatrix}\right) = \begin{bmatrix}
y +\theta_1\\
-a^2x -f(x)[y-ax+\theta_1] + g(x) + \theta_2
\end{bmatrix}.$$
If we consider that $a(t) \in AP(\mathbb{R},\mathbb{R})$ and $a(t)>\delta>0$ for a fixed $\delta$ and for all $t\in \mathbb{R}$, then there is an evolution family $\left\{ \mathbb{U}(t,s)\right\}_{t\geqslant s}$ associating with the linear evolution equation 
\begin{equation*}
\frac{d}{dt}\begin{bmatrix}
x\\y
\end{bmatrix}(t) = \mathbb{A}(t)\begin{bmatrix}
x\\y
\end{bmatrix} (t).
\end{equation*} 
and $\mathbb{U}(t,s)$ satisfies exponential dichtonomy in Assumption \ref{dich} (see \cite[Lemma 2.4]{Xu} and more details in \cite{Hale}). Since $\mathbb{A}(\cdot)$ is almost periodic, the Green function $\mathbb{G}(t,s)$ satisfies Assumption \ref{GreenAss}.
To verify the conditions in Assumption \ref{nonlinear}, we need to assume that $f \in C_b(\mathbb{R},\mathbb{R})$ is bounded from above by a positive constant $C$ and the functions $g$, $e\in C_b(\mathbb{R},\mathbb{R})$ satisfy Assumption \ref{nonlinear}. Finally, we need that the functions $g$, $\theta_1$ and $e(t,x(t))$ (hence $\theta_2$) are AP- or AAP- or PAP- to guarantee Theorem \ref{Appl1}. The detailed choices of these functions left to the readers.
\begin{remark}
Our abstract results (Theorem \ref{lem2}, Theorem \ref{th:2} and Theorem \ref{stable}) can be applied to other Li\'enard-type equations such as equations with delays (see for example \cite{Gao,Xu}) or for other second-order differential equations such as wave-type equations (see for example \cite{Diagana2011}).
\end{remark}

\subsection{A nonautonom parabolic evolution equation}\label{S52}
To end this paper, we provide an example of nonautonom parabolic equations in which we can apply the abstract results obained in Thereom \ref{th:2} and Theorem \ref{stable}. In particular, we consider the following problem
\begin{equation}\label{ex:2}
{\begin{cases}
\dfrac{\partial w(t,x)}{\partial t}
=a(t)\left[\dfrac{\partial^2w(t,x)}{\partial x^2}+\delta w(t,x)\right]+|w|^{k-1}w(t,x)+g(t,x),\\
\hfill\text { for } 0<x<\pi,\cr
w(t,0)=w(t,\pi)=0.
\end{cases}}
\end{equation}
Here, $\delta\in \mathbb{R}$ and $\delta \neq n^2$  for all $n\in \mathbb{N}$; the function $a(t)\in L^1_{loc}(\mathbb{R})$ is almost periodic (respect to $t$)  and satisfies the condition $0<\gamma_0\leqslant a(t)\leqslant \gamma_1$ for  fixed $\gamma_0,\gamma_1$; the exponent $k\in \mathbb{N}$, $k\geqslant 2$; the function  $g:\mathbb{R}\times [0,\pi] \to \mathbb{R}$ is continuous on $\mathbb{R}\times [0,\pi]$ and almost periodic with respect to $t$.
\def\T{\mathbb{T}}

Setting $X:=L^2[0,\pi]$, and let $A: X\supset D(A)\rightarrow X$ be defined by $Ay=y''+\delta y$, with the domain 
$$ D(A)=\{y\in X: y \text { and }y' \text { are absolutely continuous, } y'\in X, y(0)=y(\pi)=0\}.$$
Observe that $A$ is the generator of an analytic semigroup $\left\{\T(t)\right\}_{t\in \mathbb{R}.}$

By using the spectral mapping theorem for analytic semigroups and the fact that $\sigma(A)=\{-n^2+\delta: n=1,2,3,...\}$, we have  
\begin{equation}\label{spT}
\sigma(\T(t))=e^{t\sigma (A)}=\{e^{t(-n^2+\delta)}: n=1,2,3,...\}
\end{equation}
and hence 
\begin{equation}\label{spTT}
\sigma(\T(t))\cap {\Gamma} = \emptyset\;\forall\; t,
\end{equation}
where ${\Gamma}:=\{\lambda\in \mathbb{C} : |\lambda|=1\}$.

Setting $A(t):=a(t)A$, we obtain that $A(t)$ is the almost periodic (since $a(t)$ is almost periodic).
Moreover, the family  $\left\{A(t)\right\}_{t\in \mathbb{R}}$ generates an evolution family $\left\{U(t,s )\right\}_{t\ge s}$ which is defined by the formula $ U(t,s)=\T(\int_s^ta(\tau)d\tau).$
Since $A(t)$ is almost periodic, we have that the Green function $G(t,s)$ associating with $U(t,s)$ satisfies Assumption \ref{GreenAss} (see Remark \ref{rem}).

By \eqref{spT} and \eqref{spTT} we have that the analytic semigroup $(\T(t))_{t\in \mathbb{R}}$ is hyperbolic 
(or has an exponential dichotomy) with the projection $P$ satisfying
\begin{enumerate}
\item[$(i)$] $ \|\T(t)x\|\leqslant Ne^{-\beta t}\|x\|$ for $x\in PX;$
\item[$(ii)$] $\|\T(-t)_{|\mathrm{Kerr}P}x\|=\|(\T(t)_{|\mathrm{Kerr}P})^{-1}x\|\leqslant Ne^{-\beta t}\|x\|$ for $x\in \text{Ker}P;$
where the invertible operator $\T(t)_{|\mathrm{Kerr}P}$ is the restriction of $T(t)$ to Ker$P$, and  $N$, $\beta$ are positive constants.
\end{enumerate}
The above estimates of $\left\{\mathbb{T}(t)\right\}_{t\in \mathbb{R}}$ implies clearly that the evolution family $\left\{U(t,s )\right\}_{t\geqslant s}$ has an exponential dichotomy with the projection  $P(t)=P$ for all $t$ and the dichotomy constants $N>$ and $ \nu:=\nu(\beta)>0$ as follows
\begin{equation}
\begin{aligned}
\|U(t,s)x\|&\leqslant Ne^{-\nu (t-s)}\|x\| \text { for } x\in PX,\,t\geqslant s; \\ 
\|U(s,t)_{|\mathrm{Kerr}P} x\|&\leqslant Ne^{-\nu(t-s)}\|x\| \text{ for } x\in \text{Ker}P,\,t\geqslant s.
\end{aligned}
\end{equation}

We define the function $h: \mathbb{R} \times X \to X$ by $h(t,u(t)):=|u(t)|^{k-1}u(t)+g(t,u(t))$. The equation (\ref{ex:2}) can now be rewritten as
\begin{equation}\label{ex:22}
\dfrac{du}{dt}=A(t)u(t)+h(t,u(t)) \text{ for } u(t)(\cdot)=w(t,\cdot), 
\end{equation}
where $\|h(\cdot,0)\|_{{C_b}(\mathbb{R}, X)}\leqslant\gamma + g(\cdot,0)$ for $\gamma:=\sup\limits_{t\in [0,\pi]}(\int_0^\pi|h(t,x)|^2dx)^{1/2}$. It is clearly to see that the function $h$ satisfying Assumption \ref{nonlinear} if $g$ satisfies also this assumption.
Therefore, we apply Theorem \ref{th:2} and Theorem \ref{stable} to get the well-posedness and exponential stability of AP-, AAP- and PAP- mild solutions for equation \eqref{ex:22} (hence for origin equation \eqref{ex:2}) as follows
\begin{theorem}
Let $X=L^2([0,\pi])$ and $a(\cdot)$ be an almost periodic function, the following assertions hold
\begin{itemize}
\item[$(i)$]
If $g \in C_b(\mathbb{R}\times [0,\pi],X)$ be a given AP- (resp. PAP-) function satifying Assumption \ref{nonlinear}, $\gamma$ and the Lipschitz constants are small enough, equation \eqref{ex:22} (hence \eqref{ex:2}) has one and only one  AP- (resp. PAP-) mild solution $\hat {u}$ on a small ball of $AP(\mathbb{R}\times [0,\pi],X)$ (resp. $PAP(\mathbb{R}\times [0,\pi],X)$). Moreover, these mild solutions are exponential stable in the sense of Theorem \ref{stable}.
\item[$(ii)$]
If $g \in C_b(\mathbb{R}_+\times [0,\pi],X)$ be a given AAP- function satisfying Assumption \ref{nonlinear}. If $\|u_0\|_X$, $\gamma$ and the Lipschitz constants of functions are small enough, equation (\ref{ex:22}) (hence \eqref{ex:2}) has one and only one  AAP- mild solution $\hat {u}$ on a small ball of $AAP(\mathbb{R}_+\times [0,\pi],X)$. Moreover, the mild solution is exponential stable in the sense of Theorem \ref{stable}.
\end{itemize}
\end{theorem}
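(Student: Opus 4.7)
The plan is to verify that the abstract hypotheses (Assumption \ref{dich}, Assumption \ref{GreenAss}, and Assumption \ref{nonlinear}) all hold for the reformulated problem \eqref{ex:22}, and then simply invoke Theorem \ref{th:2} and Theorem \ref{stable}. Much of the verification has already been carried out in the preceding discussion; my job is to organize these ingredients and supply the remaining nonlinear estimate.

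First I would recall from the construction that $A(t) = a(t)A$ generates the evolution family $U(t,s) = \T(\int_s^t a(\tau)d\tau)$ on $X = L^2[0,\pi]$, and that the spectral condition \eqref{spTT} together with the lower bound $a(t)\geqslant \gamma_0 > 0$ yields an exponential dichotomy with constant projection $P(t) \equiv P$ and constants $N, \nu$. Thus Assumption \ref{dich} is satisfied. Since $a(\cdot)$ is almost periodic, so is $A(\cdot)$, and by Remark \ref{rem}(ii) the associated Green function $G(t,s)$ satisfies Assumption \ref{GreenAss}.

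Next I would verify Assumption \ref{nonlinear} for $h(t,u) := |u|^{k-1}u + g(t,u)$. For (i), the fact that $h(\cdot, u(\cdot)) \in Z$ whenever $u \in Z$ (with $Z\in\{AP,AAP,PAP\}$) follows because $Z$ is closed under composition with continuous functions satisfying a pointwise Lipschitz bound on bounded sets, and $g(\cdot, u(\cdot)) \in Z$ by the hypothesis on $g$. For (ii), the bound $\|h(\cdot,0)\|_{C_b(\mathbb{R},X)} \leqslant \gamma + \|g(\cdot,0)\|_X$ is immediate with $\gamma$ defined as in the excerpt. For (iii), the local Lipschitz estimate is the one genuinely technical point: for $u_1,u_2$ in $B_\rho^Z$, one writes
\begin{equation*}
\bigl| |u_1|^{k-1}u_1 - |u_2|^{k-1}u_2 \bigr| \leqslant k\bigl(|u_1|^{k-1}+|u_2|^{k-1}\bigr)|u_1-u_2|,
\end{equation*}
and then uses the continuous embedding (or Sobolev-type control) of the relevant regularity space into $L^\infty[0,\pi]$, together with the ball constraint $\|u_i\|_{C_b(\mathbb{R},X)} \leqslant \rho$, to obtain
\begin{equation*}
\|\,|u_1|^{k-1}u_1 - |u_2|^{k-1}u_2\,\|_X \leqslant C\rho^{k-1}\|u_1-u_2\|_X;
\end{equation*}
combined with the Lipschitz hypothesis on $g$, this gives the required constant $L = L(\rho)$ on the ball of radius $\rho$.

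With Assumptions \ref{dich}, \ref{GreenAss} and \ref{nonlinear} all in force, assertion (i) of the theorem follows directly from Theorem \ref{th:2}(i) applied with $Z=AP(\mathbb{R},X)$ and $Z=PAP(\mathbb{R},X)$: by choosing $\rho$ small we make the effective Lipschitz constant $L(\rho) = C\rho^{k-1}+L_g$ and $\gamma$ small, and the contraction mapping argument produces the unique mild solution in $B_\rho^Z$. Assertion (ii) follows from Theorem \ref{th:2}(ii) with $Z=AAP(\mathbb{R}_+,X)$, requiring additionally the smallness of $\|u_0\|_X$. Finally, the same local Lipschitz estimate gives a constant $L_1$ on the doubled ball $B_{2\rho}^Z$; taking $\rho$ small enough to satisfy $\dfrac{2(1+H)NL_1}{\beta} < \dfrac{1}{2}$, Theorem \ref{stable} applies and yields the exponential decay $\|u(t)-\hat u(t)\|_X \leqslant Ce^{-\mu t}\|P u(0)-P\hat u(0)\|_X$. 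The main obstacle in the whole argument is the nonlinear Lipschitz estimate for the power nonlinearity $|u|^{k-1}u$ in $L^2[0,\pi]$, since this requires controlling an $L^\infty$-type norm on the ball; everything else is a direct appeal to the abstract theorems already established.
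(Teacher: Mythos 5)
Your overall strategy coincides with the paper's: Subsection \ref{S52} contains no separate proof environment for this theorem, and the intended argument is exactly what you describe --- check that $U(t,s)=\mathbb{T}(\int_s^t a(\tau)\,d\tau)$ has an exponential dichotomy via the spectral condition \eqref{spTT} and the bound $a(t)\geqslant\gamma_0>0$, deduce Assumption \ref{GreenAss} from the almost periodicity of $A(t)=a(t)A$ via Remark \ref{rem}, verify Assumption \ref{nonlinear} for $h(t,u)=|u|^{k-1}u+g(t,u)$, and then quote Theorems \ref{th:2} and \ref{stable}.

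The one place where your write-up goes beyond the paper is also where it develops a genuine gap: the local Lipschitz estimate for the power nonlinearity. You correctly isolate the pointwise bound $\bigl||u_1|^{k-1}u_1-|u_2|^{k-1}u_2\bigr|\leqslant k\bigl(|u_1|^{k-1}+|u_2|^{k-1}\bigr)|u_1-u_2|$, but to convert it into $\|\,\cdot\,\|_{L^2}\leqslant C\rho^{k-1}\|u_1-u_2\|_{L^2}$ you invoke an embedding of ``the relevant regularity space'' into $L^\infty[0,\pi]$. No such space is available in the framework as set up: the abstract theorems work in $C_b(\mathbb{R},X)$ with $X=L^2[0,\pi]$, and the ball constraint $\|u_i\|_{C_b(\mathbb{R},X)}\leqslant\rho$ only controls the $L^2$ norm of $u_i(t)$, which does not bound $\||u_i(t)|^{k-1}\|_{L^\infty}$ (a tall narrow spike has small $L^2$ norm while $|u|^{k-1}u$ has arbitrarily large $L^2$ norm). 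So the map $u\mapsto|u|^{k-1}u$ is not Lipschitz on $L^2$-balls, and Assumption \ref{nonlinear}(iii) fails as stated for this $h$. To be fair, the paper does no better --- it simply asserts that ``it is clearly to see that the function $h$ satisfies Assumption \ref{nonlinear}'' --- so this is a defect you inherit from the source rather than introduce; a correct treatment would either work in a smaller state space such as $C_0[0,\pi]$ or $H_0^1(0,\pi)$ (which does embed into $L^\infty[0,\pi]$), or exploit the smoothing of the analytic semigroup to place the mild solution in a space with $L^\infty$ control. Everything else in your proposal (the reduction to Theorems \ref{th:2} and \ref{stable}, the smallness conditions on $L$, $\gamma$ and $\|u_0\|_X$, and the choice of $L_1$ on the doubled ball for the stability statement) matches the paper's intent.
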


\end{document}